\documentclass[11pt]{amsart}
\usepackage{amssymb,amsmath,amsfonts,amsthm,amscd,enumerate}

\usepackage{graphics}
\usepackage[colorlinks=true]{hyperref}
\usepackage{amssymb,amsmath,amsfonts,amsthm,amscd,mathrsfs}
\usepackage{fullpage}

\input xy
\xyoption{all}

\newcommand{\RR}{\mathbb R}
\newcommand{\R}{\mathbb R}
\newcommand{\N}{\mathbb N}
\newcommand{\maA}{\mathcal{A}}
\newcommand{\maH}{\mathcal{H}}

\newcommand{\maB}{\mathcal{B}}

\newcommand{\maD}{\mathcal{D}}

\newcommand{\ZZ}{\mathbb{Z}}
\newcommand{\Sch}{\mathscr{S}}
\newcommand{\NN}{\mathbb{N}}

\newcommand{\maM}{\mathcal{M}}
\newcommand{\maE}{\mathcal{E}}

\newcommand{\maG}{\mathcal{G}}

\newcommand{\pdo}[1]{\Psi^{\infty}(#1)}
\newcommand{\smooth}{\mathcal{C}^{\infty}}
\newcommand{\smoothing}{\Psi^{\!-\!\infty}}

\newcommand{\Op}{\operatorname}
\newcommand{\ep}{\varepsilon}

\newcommand{\ip}[1]{\langle #1 \rangle}


\theoremstyle{definition}
\newtheorem{definition}{Definition}[section]
\newtheorem{eg}[definition]{\bf Example}

\theoremstyle{plain}
\newtheorem{theo}[definition]{Theorem}
\newtheorem{prop}[definition]{Proposition}
\newtheorem{lem}[definition]{Lemma}

\theoremstyle{remark}
\newtheorem{remark}[definition]{Remark}

\newcommand{\WF}{\mathrm{WF}}

\begin{document}

 \title[]{Singularity structures for noncommutative spaces} 

\author{Shantanu Dave}
\address{University of Vienna, Faculty of Mathematics, Nordbergstrasse 15, 1090 Vienna, Austria}
\email{shantanu.dave@univie.ac.at}
\thanks{Supported by FWF grants P20525 and P24420 of the Austrian Science Fund.}
\author{Michael Kunzinger}
\address{University of Vienna, Faculty of Mathematics, Nordbergstrasse 15, 1090 Vienna, Austria}
\email{michael.kunzinger@univie.ac.at}
\thanks{Supported by FWF grants Y237-N13 and P20525 of the Austrian Science Fund.}

\begin{abstract}We introduce a (bi)category $\mathfrak{Sing}$  whose objects can be functorially assigned spaces of distributions and generalized functions. In addition, these spaces of distributions and generalized functions possess intrinsic notions of regularity and singularity  analogous to  usual Schwartz distributions on manifolds. The objects in this category can be obtained from smooth manifolds, noncommutative spaces, or Lie groupoids. An application of these structures relates the propagation of singularities on a groupoid with that  on the base manifold.

\vskip 1em
\noindent
{\em MSC 2010:} Primary: 58J40; Secondary: 58J47, 58J42
\end{abstract}

\maketitle

\section{Introduction}
A precise description of the propagation of singularities under the solution operators of hyperbolic partial differential equations depends on the formulation of the notions of singular support and wavefront sets for distributions.  The propagation of singularities can then be viewed as a link between the wave and corpuscular theory of light.  There are many applications in geometry and analysis of this phenomenon, particularly to the spectral theory of  elliptic pseudo-differential operators. 

These phenomena are even more remarkable in the case of manifolds with  boundaries or corners (see  \cite{Melrose,MVW,Vasy}).  In principle, it is evident that propagation of singularities occurs as the solution operators to hyperbolic partial differential equations are constructed via a parametrix construction related to some calculus of pseudo-differential operators, whereby the dynamics is transferred to the space of principal symbols. In many situations involving manifolds with corners, the relevant calculus can be constructed based on differentiable groupoids  \cite{BM,NWX,LN}. 
In this paper we present  a category-theoretic view of distribution theory, including singularities and propagation of singularities.  We also consider morphisms that relate these propagations. As an application, we relate the longitudinal 
propagation of singularities on certain groupoids to the transverse propagation of singularities on the base manifold with respect to vector representation of longitudinal operators.

First we consider a category $\mathfrak{Sing}$ whose objects,  referred to as {\em singularity structures}, are triples $(A,X,Y)$ where $X$ and $Y$ are suitable Fr\'echet modules over a filtered algebra $A$.  In this category, an instance of a closed manifold $M$ is represented by $A=\Psi^{\infty}(M)$, the algebra of (classical) pseudo-differential operators, $X=\smoothing(M)$, the ideal of regularizing operators and $Y=\smooth(M)$. More generally, objects in $\mathfrak{Sing}$ are provided by noncommutative spaces  represented by (regular) spectral triples in the sense of Connes, geometric Hilbert spaces, and groupoids. 

The association of distributions to a singularity structure follows from a simple observation. In the classical case of
a closed manifold $M$, represented as above, the space of distributions, that is the dual space to the space of densities $\maD'(M)=|\Lambda|(M)'$, can be realized exactly as a left $A$ module map between $X$ and $Y$. 
As a direct generalization, we consider abstract generalized functions and distributions  on any singularity structure $(A,X,Y)$ and obtain in a natural way the notion of regularity and singularity of abstract distributions and generalized functions associated to a general triple $(A,X,Y)$. Thus
 Sobolev type regularity (by choice of a hypo-elliptic, or a self-adjoint positive  operator and a scale), and constructions of singular supports and wavefront sets, are built in by definition. Regularity in this sense can be described entirely by the graded Fr\'echet space structures on $X$ and $Y$, however the  singularities depend crucially on the choice of the algebra $A$. 
 
 In the classical analysis of linear PDEs the main benefit of singularity and regularity analysis of distributions lies in the use of their compatibility with respect to pull-backs and push-forwards of distributions under appropriate smooth maps (see \cite{GS} Chapter 6 for some interesting applications). Thus a class of morphisms is introduced so that
   the microlocal information defined for an $(A,X,Y)$-generalized function behaves functorially with respect to them. 
The obvious class of morphisms of these singularity structures is very restrictive, hence it is necessary to consider a larger class of morphisms essentially in the spirit of Morita equivalence. It is expected that suitably regular KK-cycles,  possibly with additional structure, will provide morphisms of the singularity structures, especially the ones arising from spectral triples.  Here we shall focus on morphisms defined out of vector representations on Lie-groupoids. This is the most relevant case to understand remarkable propagation results in, e.g., \cite{Hormander, Melrose, Vasy}. However, the assumptions in this article are much stronger, which simplifies the exposition. We expect similar results to hold for differential groupoids associated to manifolds with corners. 

The interpretation of the propagation of singularity phenomenon in this new context is akin to an action by  $\RR$. This can be visualized as an abstract Egorov theorem, which in the classical context provides an $\RR$-action on the singularity structure  associated to a manifold as above.  Thus, a comparison of the dynamics associated to various singularity structures with $\RR$-actions is now possible via considering equivariant morphisms between them.
Finally, as already mentioned, an application of the category  $\mathfrak{Sing}$ is to show that the anchor map  on the Lie algebroid $A(\maG)$ relates the longitudinal propagation of singularities on a groupoid with the transverse propagation of singularities on the base manifold. 
 
\section{Algebraic basics}
This section introduces the notation used in this article. We shall also collect some examples for easy reference.

We shall consider an algebra $A$, a right $A$-module $X$  and a left  $A$-module $Y$ given to us. The space of (set-theoretic) maps  from $X$ to $Y$ will be denoted by $\maM(X,Y)$  and is given a natural  $A$-bimodule structure. For any $\phi:X\rightarrow Y$ and $a\in A$, this structure is defined by
\[(a\phi)(x):=a\phi(x)\quad (\phi a)(x):=\phi(xa).\]
For our purposes both the modules $X$ and $Y$  shall be  Fr\'echet spaces  provided with a grading. Recall that a  grading on a Fr\'echet space $X$  is a  sequence of  seminorms $\|\,\|_n$ that  is increasing   $\|\,\|_1\leq\|\,\|_2\leq\ldots $ and such that it generates the locally convex topology  on $X$ (cf.\ 
\cite{Hamilton}, Def.\ 1.1.1).

Additionally we shall assume that the algebra $A$ is filtered. This  entails that there exist subspaces $A_i\subset A$  for each $i\in\ZZ$ such that $ A_i\subseteq A_{i+1}$  and $A_i\cdot A_j\subseteq A_{i+j}$ for all $i$, $j$.
\begin{definition}\label{Fmodule}
A (right) module $X$ over a filtered algebra $A$ is called a  Fr\'echet module if $X$ is a graded Fr\'echet space and for
each $P\in A_j$ there exists a constant $C>0$ such that for all $n\in \N$:
\[\|xP\|_n\leq C\|x\|_{n+j}~~~\textrm{for all}~~x\in X.\]
Analogous notions for left and bi-modules over $A$ will be understood.
\end{definition}
 The following two  examples provide a strong motivation for the construction developed in
 this paper.

 \begin{eg}\label{pseudo_closed}
  Let $A=\Psi^{\infty}(M)$  be the algebra of  (classical) pseudo-differential operators on a closed manifold $M$ with the usual filtration given by the degree of the operator.
  
   We shall grade  $\smooth(M)$  with Sobolev norms. To this end, let $\Delta$ denote  the Laplace operator on $M$ associated to some Riemannian metric on $M$ and set
\[\|f\|_n:=\|(1+\Delta)^{\frac{n}{2}}f\|_{L^2(M)}.\]
 With the above grading, $\smooth(M)$  is a graded Fr\'echet algebra  in the sense of Definition  \ref{FA} below. 

For any operator $T:L^2(M)\rightarrow L^2(M)$ let $\|D\|_{\operatorname{HS}}$  denote its  Hilbert-Schmidt norm.
Given integers $p$, $q$ we set  
$\|T\|_{p,q}:=\|(1+\Delta)^{\frac{p}{2}}T(1+\Delta)^{\frac{q}{2}}\|_{\operatorname{HS}}$.
 
 Throughout this article, the  grading on $\smoothing(M)$ will be defined  by:
\[
\|T\|_n:=\sum_{\substack{q+p\le n \\ q,p\geq -n}	
}
\|T\|_{p,q}.
\]
Since $\smoothing(M)$ is a two-sided ideal in $A=\Psi^{\infty}(M)$, we shall take this module  structure and  consider the natural (left) action of $A$  on $\smooth(M)$.
Both $\smooth(M)$ and $\smoothing(M)$ are then  Fr\'echet modules over $A$ 

We note that by the Schwartz kernel theorem, given a choice of non-vanishing density on $M$ one can identify $\smoothing(M)$ with $\smooth(M\times M)$.  The above grading on $\smoothing(M)$ is then equivalent to the tensor-product grading with the chosen Sobolev grading on $\smooth(M)$.

 \end{eg}
 
 Next we consider a more general example coming from a regular spectral triple in the sense of Connes \cite{NCG}.
 
 Let  $D$ be an unbounded self-adjoint  operator on a Hilbert space  $H$ with compact resolvent. Let $\Delta:=D^2$.
  We can define a  Sobolev space $\maH^s$ for $s\in \RR$  as usual by means of completion with respect to the seminorms
  \[\|x\|_s^2:=\|x\|^2+\|\Delta^{\frac{s}{2}} x\|^2.\]

   The core of $D$, given by $\maH^{\infty}:=\bigcap_s\maH^s$ is then  naturally a  Fr\'echet space.
\begin{eg}   
   An algebra $\maA$ of  pseudodifferential operators such that $\maH^{\infty}$ is a Fr\'echet   $\maA$ module can be obtained from an involutive algebra $A$ of bounded operators on $H$ which satisfies a certain regularity condition, namely that $(A,H,D)$ form a regular spectral triple. We briefly recall the  notion of a regular spectral triple  and the associated algebra of pseudo-differential operators. To describe this algebra of pseudo-differential operators we follow the presentation in \cite{Higson}.
 
     A spectral triple   consists of an involutive algebra of operators $A$ on a Hilbert space $H$  and a specified unbounded densely defined self-adjoint operator $D$ such that $D$ has compact resolvent, the elements in $A$ preserve $\operatorname{Domain}(D)$, and the  commutators $[D,a]$ extend to  bounded operators on $H$ for all $a\in A$.
     
    Let $\delta$ be the unbounded  derivation defined on the Banach algebra $\maB(H)$ of bounded operators on $H$ by
    \[\delta(T):=[|D|,T]\qquad T~\textrm{preserves}~\operatorname{Domain}(|D|).\]
  A spectral triple $(A,H,D)$ is called regular if for any $a\in A$  and any positive integer $n$  both $a$ and $[D,a]$ are in $\operatorname{Domain}(\delta^n)$.
  
  We begin by defining an algebra of differential operators $\maD:=\bigcup_{k=0}^{\infty}\maD_k(A)$ for a regular spectral triple. To this end, first set  the algebra of order zero differential operators to be $\maD_0:=A+[D,A]$, and then inductively define a filtered algebra $\maD:=\bigcup_k\maD_k$ by
  \begin{eqnarray}\label{gen_filtration}
 \maD_k:=\sum \maD_i\cdot \maD_{k-i}+[\Delta,\maD_{k-1}]+\maD_0[\Delta,\maD_{k-1}]+\maD_{k-1}.
 \end{eqnarray}
   
   \begin{definition}\label{pseudo_def}
We shall call an operator $T$ on $\maH^{\infty}$  a basic  pseudo-differential operator of order $k$ if for any $l\in\ZZ$  there exist $m\in \ZZ$, $R$ and $X$ so that
\[T=X\Delta^{\frac{m}{2}}+R,\]
 where $X\in\maD$, $order(X)\leq k-m$, and the operator $R:\maH^s\rightarrow \maH^{s-l}$ is bounded for all $s\in\RR$.
 
 More generally, a pseudo-differential operator is a finite linear combination of basic pseudo-differential operators. The algebra of all pseudo-differential operators shall be denoted by $\maA:=\maA_{(A,H,D)}$.
 \end{definition}
 
 The following result is a source of examples of Fr\'echet modules. 
 \begin{prop}
  Let $(A,H,D)$ be a regular spectral triple such that $A$ maps $\maH^{\infty}$ to itself. Then $\maH^{\infty}$ is a Fr\'echet module over $\maA$.
 \end{prop}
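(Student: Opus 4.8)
The plan is to check the two conditions of Definition~\ref{Fmodule}. First, $\maH^\infty$ is a graded Fr\'echet space: replacing $\Delta^{s/2}$ by $(1+\Delta)^{s/2}$ in the seminorms (which gives an equivalent grading) makes the sequence $\|\cdot\|_n$ manifestly increasing, since $(1+\lambda)^{n/2}\le(1+\lambda)^{(n+1)/2}$ for $\lambda\ge0$ by the spectral theorem, and completeness of $\maH^\infty=\bigcap_s\maH^s$ in the resulting topology is the routine diagonal argument for an intersection of Hilbert spaces with compatible inclusions. The standing hypothesis that $A$ preserves $\maH^\infty$ ensures that $\maD_0=A+[D,A]$, hence all of $\maD$ and of $\maA$, really do act on $\maH^\infty$, so the remaining point is the module estimate, which (taking $s=n+j$, and noting that negative orders are trivial since $(1+\Delta)^{r/2}$ is a contraction for $r\le0$) amounts to the assertion that every $T\in\maA$ of order $k$ extends to a bounded operator $\maH^s\to\maH^{s-k}$ for all $s$.

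Next I would reduce this assertion using Definition~\ref{pseudo_def}. A basic pseudo-differential operator of order $k$ can, taking $l=k$, be written $T=X\Delta^{m/2}+R$ with $X\in\maD$ of order $\le k-m$ and $R\colon\maH^s\to\maH^{s-k}$ bounded for every $s$; since $\Delta^{m/2}\colon\maH^s\to\maH^{s-m}$ is bounded by functional calculus, the problem collapses to showing that a differential operator in $\maD_j$ maps $\maH^s$ boundedly to $\maH^{s-j}$. General pseudo-differential operators are finite linear combinations of basic ones, with summands of lower order mapping into a Sobolev space continuously contained in $\maH^{s-k}$, so linearity is harmless.

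The heart of the argument is the claim that $\maD_j\subseteq\operatorname{OP}^j$ for all $j\ge0$, in the sense of the pseudo-differential calculus attached to $(A,H,D)$ in \cite{Higson}: here $\operatorname{OP}^0$ is the algebra of operators $S$ on $\maH^\infty$ such that $S$ and all its iterated commutators $\delta^n(S)$ with $|D|$ are bounded on $H$, and $\operatorname{OP}^r:=\Delta^{r/2}\,\operatorname{OP}^0$. Granting the claim, $\operatorname{OP}^j$ maps $\maH^s$ to $\maH^{s-j}$ because any $S\in\operatorname{OP}^0$ is bounded on each $\maH^s$: for integer $s\ge0$ this follows from the identity $|D|^nS=\sum_{i=0}^n\binom ni\delta^i(S)\,|D|^{n-i}$ together with boundedness of the $\delta^i(S)$ and of the negative powers of $|D|$ (using compactness of the resolvent of $D$, and working with $(1+\Delta)^{1/2}$ to circumvent $\ker D$), and the remaining values of $s$ come by duality and interpolation. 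I would then prove the claim by induction on $j$ via the recursion (\ref{gen_filtration}): for $j=0$, regularity of the spectral triple says precisely that every $a\in A$ and every $[D,a]$ has all $\delta$-iterates bounded, which together with $A\maH^\infty\subseteq\maH^\infty$ gives $\maD_0\subseteq\operatorname{OP}^0$; for the inductive step, the products satisfy $\maD_i\maD_{k-i}\subseteq\operatorname{OP}^i\operatorname{OP}^{k-i}=\operatorname{OP}^k$, the summand $\maD_{k-1}$ lies in $\operatorname{OP}^{k-1}\subseteq\operatorname{OP}^k$, and the commutator terms are handled via $[\Delta,T]=|D|\,\delta(T)+\delta(T)\,|D|$ once one knows $\delta$ preserves each $\operatorname{OP}^r$ and $|D|\in\operatorname{OP}^1$, giving $[\Delta,\maD_{k-1}]\subseteq\operatorname{OP}^k$ and $\maD_0[\Delta,\maD_{k-1}]\subseteq\operatorname{OP}^k$.

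The main obstacle, as usual with this calculus, is not a single deep estimate but the careful bookkeeping underlying the $\operatorname{OP}$-formalism: that $\operatorname{OP}^0$ is an algebra stable under $\delta$ and under conjugation by complex powers of $\Delta$, that the combinatorial identity above is legitimate on the relevant operator domains, and that the passage from integer to real Sobolev exponents (and a possible finite-dimensional $\ker D$) causes no harm. All of this is available in \cite{Higson}, so in the write-up it would be cited rather than reproved in full.
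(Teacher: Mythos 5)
Your proof is correct and follows essentially the same route as the paper's: both reduce to showing that the differential algebra $\maD$ lies in a filtered algebra of operators controlled by boundedness of iterated $\delta$-commutators (your $\operatorname{OP}^j$ is the paper's $\maB_j$, built from $b|D|^k$ with $b$ in a $\delta$-stable algebra $\Psi$), both use the identity $|D|^n S = \sum_i \binom{n}{i}\delta^i(S)|D|^{n-i}$ to get the Sobolev-scale boundedness, both run the same induction over the recursion \eqref{gen_filtration}, and both defer the bookkeeping to \cite{Higson}. Your version is a bit more explicit about the smoothing remainder $R$ and the reduction from $\maA$ to $\maD$, but the underlying argument is the same.
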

 \begin{proof} An equivalent statement is proved in \cite{CM} Appendix B. We shall follow the presentation in \cite{Higson} and give a proof here for completeness. 
 
 Since the spectral triple is regular we can define an algebra $\Psi$  generated by all elements of the form $\delta^n(b)$, where $b\in A$ or $b\in [A,D]$.
  By definition, all elements of $\Psi$ extend to bounded operators on $H$ and $\delta(\Psi)\subseteq \Psi$. Next, we define a new algebra $\maB$ of elements of the form $b|D|^k$, where $b\in \Psi$ and $k\geq 0$. Note that
  \[b_1|D|^{k_1}b_2|D|^{k_2}=\sum_j
  \begin{pmatrix}
	k_1 \\ b_1
\end{pmatrix}
  \delta^j(b_2)|D|^{k_1+k_2-j}, \] 
which proves that $\maB$ is a filtered algebra with $\maB_k$ generated by elements of the form $b|D|^j$ with $j\leq k$

It is clear that $\maH^{\infty}$ is a graded Fr\'echet module over $\maB$. Simply note that for any $v\in\maH^{\infty}$ and $b|D|^j\in \maB_k$,  for $j\leq k$ we have
\[\|b|D|^lv\|\leq\|b\|\|D^lv\|\leq \|b\|(\|D^kv\|+\|v\|),\]
where the last inequality follows from spectral theory. 

We now claim that the algebra $\maA$ is a filtered subalgebra of $\maB$. To this end it suffices to show that the algebra of differential operators $\maD$ is a filtered subalgebra of $\maB$. We first note that $\maD_0\subseteq \maB_0$. Based on this we may use induction to see that if $\maD_{k-1}\subseteq \maB_{k-1}$, the calculation
\[[b|D|^{k-1},|D|^2]=2\delta(b)|D|^k+\delta^2(b)|D|^{k-1}\]
  shows that $\maD_k\subseteq \maB_k$.
\end{proof}

  Since the operator $D$ is assumed to have compact resolvents, it follows that $\maH^{\infty}$ is a Montel space.  The consequence of interest is that $\maH^{\infty}$ is reflexive.   Thus we consider the  space of linear maps from the  dual space $\maH^{-\infty}=\bigcup_s H^s$ to $\maH^{\infty}$ as smoothing operators. In addition, when the spectral triple is $p$-summable then  $\maH^{\infty}$ is nuclear. This justifies that we define\footnote{We denote by $\hat{\otimes}$ the projective tensor product.}
  \[\smoothing(A,H,D):=\left(\maH^{-\infty}\right)^* \hat{\otimes}H^{\infty}=\maH^{\infty}\hat{\otimes}\maH^{\infty}.\]
Here we take the tensor product graded Fr\'echet structure on $\smoothing(A,H,D)$. 
   \begin{prop}If $(A,H,D)$ is a $p$-summable regular spectral triple and $A$ maps $\maH^{\infty}$ to itself, then $\smoothing(A,H,D)$ is a graded Fr\'echet module.
   \end{prop}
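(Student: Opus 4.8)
The plan is to reduce the statement to the previous proposition by the same device used there: exhibit $\smoothing(A,H,D)=\maH^{\infty}\hat{\otimes}\maH^{\infty}$ as a Fréchet module over $\maA$ by first treating it as a module over the larger algebra $\maB$ of elements $b|D|^k$ with $b\in\Psi$, and then invoking the inclusion $\maA\subseteq\maB$ of filtered algebras established above. Concretely, $\maA$ acts on the left on the second tensor factor, via the left $\maA$-module structure on $\maH^{\infty}$ from the preceding proposition; the first factor $(\maH^{-\infty})^{*}\cong\maH^{\infty}$ is left untouched. So I would first record that under $p$-summability $\maH^{\infty}$ is nuclear, hence $\maH^{\infty}\hat{\otimes}\maH^{\infty}$ is a well-defined graded Fréchet space with the projective tensor-product grading $\|\,\|_n^{\otimes}$ generated (up to equivalence) by the cross-seminorms $\|u\otimes v\|_n^{\otimes}=\|u\|_n\|v\|_n$, where $\|\,\|_n$ is the Sobolev grading $\|x\|_n^2=\|x\|^2+\|\Delta^{n/2}x\|^2$ on $\maH^{\infty}$.

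The key estimate is then purely formal once the module estimate on the single factor is known. For $P\in\maA_j\subseteq\maB_j$ and a decomposable element $u\otimes v$, one has $P\cdot(u\otimes v)=u\otimes(Pv)$, and by Definition~\ref{Fmodule} applied to $\maH^{\infty}$ as a module over $\maA$ (which by the earlier proposition satisfies $\|Pv\|_n\le C\|v\|_{n+j}$ — after flipping sides appropriately, $\smoothing$ carries the left action, so one uses the analogous left-module estimate, obtained from the right-module one by passing to adjoints) we get
\[
\|u\otimes(Pv)\|_n^{\otimes}=\|u\|_n\,\|Pv\|_n\le C\,\|u\|_n\,\|v\|_{n+j}\le C\,\|u\|_{n+j}\,\|v\|_{n+j}=C\,\|u\otimes v\|_{n+j}^{\otimes},
\]
using monotonicity of the gradings in the last step. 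Since the projective tensor seminorm $\|\,\|_n^{\otimes}$ is the largest cross-seminorm, this bound passes from decomposable tensors to their convex, balanced hulls and hence to all of $\maH^{\infty}\hat{\otimes}\maH^{\infty}$: for any $w$ and any $\ep>0$ write $w=\sum_k\lambda_k\,u_k\otimes v_k$ with $\sum_k|\lambda_k|\|u_k\|_{n+j}\|v_k\|_{n+j}\le\|w\|_{n+j}^{\otimes}+\ep$, apply the estimate termwise to $Pw=\sum_k\lambda_k\,u_k\otimes(Pv_k)$, and let $\ep\to0$. This yields $\|Pw\|_n^{\otimes}\le C\|w\|_{n+j}^{\otimes}$, which is exactly the Fréchet-module inequality.

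The main obstacle — really the only point requiring care — is the bookkeeping of sides: $\smoothing(A,H,D)$ must be a module on the correct side to fit into a singularity structure $(A,X,Y)$, and the preceding proposition gives $\maH^{\infty}$ as a \emph{right} $\maA$-module whereas the natural action on $\smoothing$ through the cokernel factor $\maH^{\infty}$ is on the \emph{left}. I would handle this by observing that, because $\maA$ is an algebra of operators on $\maH^{\infty}$ closed under the relevant adjoint operation (the spectral triple is involutive and $\Psi$, $\maB$ are built from $\delta^n(b)$ with $b$ or $[D,b]$ self-adjoint up to bounded terms), the left-module estimate on $\maH^{-\infty}$, or equivalently on its dual $\maH^{\infty}$, follows from the right-module estimate by duality, with the same filtration degree $j$ and a possibly different constant. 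A secondary, genuinely routine point is confirming that the projective and (since $\maH^{\infty}$ is nuclear) injective tensor gradings agree up to equivalence, so that "the tensor product graded Fréchet structure" is unambiguous; this is standard nuclear-space theory and I would simply cite it. No new analytic input beyond the two earlier propositions and Grothendieck's theory of nuclear spaces is needed.
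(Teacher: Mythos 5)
Your argument is correct in substance but takes a genuinely different route from the one the paper signals. The paper gives no proof here; it merely remarks that the essential idea is the same as in Example \ref{orderm}, which is a commutation argument: decompose $P\in\maA_j$ as $X\Delta^{m/2}+R$ per Definition \ref{pseudo_def}, absorb the order-zero part into the algebra $\maB$, and move powers of $\Delta^{1/2}$ past it, estimating the Hilbert--Schmidt--type seminorms $\|\Delta^{p/2}T\Delta^{q/2}\|_{\operatorname{HS}}$ directly. You instead factor the whole claim through the one-factor estimate $\|Pv\|_n\le C\|v\|_{n+j}$ of the preceding proposition and let the projective tensor product do the rest. That is more economical: no further use of the internal structure of $\maA$ is needed, and the role of the preceding proposition becomes transparent. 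Two points deserve to be tightened, though. First, the paper never makes precise what ``the tensor product graded Fréchet structure'' on $\maH^{\infty}\hat\otimes\maH^{\infty}$ means; in the classical model (Example \ref{pseudo_closed}) the explicit grading is $\|T\|_n=\sum_{p+q\le n,\,p,q\ge -n}\|(1+\Delta)^{p/2}T(1+\Delta)^{q/2}\|_{\operatorname{HS}}$, whose terms involve single-factor Sobolev indices up to $2n$ and which is therefore not obviously tamely equivalent (with bounded degree) to your diagonal cross-seminorm $\pi_n(u\otimes v)=\|u\|_n\|v\|_n$. Your argument proves the module estimate for the diagonal grading, and an Example-\ref{orderm}-style computation proves it for the sum grading; either way the Proposition holds, but you should be explicit about which grading you are using rather than relying on an unstated equivalence. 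Second, the side-bookkeeping at the end is slightly off: the preceding proposition gives $\maH^{\infty}$ as a \emph{left} $\maA$-module, which already handles the action $P\cdot(u\otimes v)=u\otimes Pv$ through the output factor; what the \emph{right} action on $\smoothing(A,H,D)$ actually requires (acting through the input factor $(\maH^{-\infty})^{*}\cong\maH^{\infty}$) is not a duality argument but simply the observation that $\maA$ is $*$-closed, so $P^{*}\in\maA_j$ whenever $P\in\maA_j$ and the same one-factor estimate applies to $P^{*}$. Neither issue is fatal, but both should be stated cleanly.
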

   The essential idea of the proof is the same as in the special case in Example \ref{orderm} below.
  \end{eg} 



\section{Abstract Regularity}\label{PT}

We first  consider regularity of maps between  Fr\'echet spaces.  We refer to \cite {Hamilton} for further  study. 
\begin{definition} 
Let $X$ and $Y$  be graded Fr\'echet spaces.  We denote by $\|.\|_n$ and $\|.\|'_n$ the $n$-th graded norm on $X$  and $Y$, respectively. We say that a Fr\'echet  smooth map $\phi:X\rightarrow Y$ is polynomially  tame if there exist $b,k\in\NN$  and some $r\in\ZZ$ such that
\begin{eqnarray}\label{tamer}
\|\phi(x)\|'_n\leq C_n\|x\|^k_{n+r}\quad \textrm {for all}\, n\geq b+|r|.
\end{eqnarray}
Here $C_n>0$ is a constant that depends only on $n$. If $k=1$ we call $\phi$ linearly tame.
The number $r$  is called the degree of tameness and the set of all maps  of tameness degree $r$  is denoted by $\Op{PT}^r(X,Y)$. 
Then clearly $\Op{PT}^r(X,Y) \subseteq \Op{PT}^s(X,Y)$ for $r\le s$.
We set $\Op{PT}(X,Y):=\bigcup_r\Op{PT}^r(X,Y)$. 

\begin{remark}\label{lintameremark} Note that a nontrivial linear map $\phi$ is polynomially tame if and only it is linearly tame.
Indeed, if $k$ was greater $1$ in (\ref{tamer}) then replacing $x$ by $\lambda x$ ($\lambda \in\R$) would 
entail $\phi=0$. 
\end{remark}

A polynomially tame map is called regular if it is tame of all orders. We denote by $\Op{Reg}(X,Y)$ the space of all regular maps between $X$ and $Y$, i.e.,
 \[\Op{Reg}(X,Y)= \bigcap_r\Op{PT}^r(X,Y).\]
\end{definition}
 We will also require the following tameness  property for  an associative multiplication on a Fr\'echet  space.
\begin{definition}\label{FA}
We say that $X$ is a Fr\'echet  algebra if it is a Fr\'echet space with an associative product and the multiplication is jointly continuous. A graded Fr\'echet algebra is a Fr\'echet algebra that is a graded Fr\'echet space and such that the   multiplication satisfies the following tameness  condition: there exist $b,r_1,r_2\in\NN$ such that
\[\|x\cdot y\|_n\leq C_n\|x\|_{n+r_1}\|y\|_{n+r_2}\quad \forall \,\,n\geq b.\]
\end{definition}
With the above definition the following lemma is self-evident.
\begin{lem}\label{composition}
Let $Y$ be a graded Fr\'echet algebra and let $X$ be any graded Fr\'echet space. Then the space 
$\Op{PT}(X,Y)$ of polynomially tame maps from $X$ to $Y$ is an algebra under pointwise operations. The space of regular maps $\Op{Reg}(X,Y)$ is an ideal in $\Op{PT}(X,Y)$. Moreover, given graded Fr\'echet spaces $X,Y,Z$ and $\phi\in PT(X,Y)$ and $\psi\in \Op{Reg}(Y,Z)$, then $\psi\circ\phi\in \Op{Reg}(X,Z)$.
\end{lem}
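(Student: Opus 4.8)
The plan is to verify the three assertions in turn, all of them by direct estimates with the tameness inequalities, carefully tracking the degrees. Fix graded seminorms $\|\cdot\|_n$ on $X$ and $\|\cdot\|'_n$ on $Y$, and let the multiplication on $Y$ satisfy $\|u\cdot v\|'_n \le C_n \|u\|'_{n+r_1}\|v\|'_{n+r_2}$ for all $n\ge b_0$, with $b_0, r_1, r_2\in\NN$ as in Definition~\ref{FA}.

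First I would show $\Op{PT}(X,Y)$ is an algebra under pointwise operations. Closure under addition and scalar multiplication is immediate: if $\phi\in\Op{PT}^r$ and $\psi\in\Op{PT}^s$ with $r\le s$, then both lie in $\Op{PT}^s$ (by the stated inclusion $\Op{PT}^r\subseteq\Op{PT}^s$), and adding the two defining inequalities together with the monotonicity of the seminorms gives $\|(\phi+\psi)(x)\|'_n\le C'_n\|x\|^{\max(k_\phi,k_\psi)}_{n+s}$ for $n$ large, so $\phi+\psi\in\Op{PT}^s$. For the product, suppose $\|\phi(x)\|'_n\le C_n\|x\|^{k}_{n+r}$ for $n\ge b_1+|r|$ and $\|\psi(x)\|'_n\le D_n\|x\|^{\ell}_{n+s}$ for $n\ge b_2+|s|$. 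Put $t:=\max(r,s)$. Then for $n\ge b_0$ and $n$ large enough that both factor-estimates apply at level $n+r_1$ and $n+r_2$ respectively, the multiplicative tameness of $Y$ gives
\[
\|(\phi\cdot\psi)(x)\|'_n = \|\phi(x)\cdot\psi(x)\|'_n \le C_n\,\|\phi(x)\|'_{n+r_1}\,\|\psi(x)\|'_{n+r_2}
\le C'_n\,\|x\|^{k}_{n+r_1+r}\,\|x\|^{\ell}_{n+r_2+s}\le C''_n\,\|x\|^{k+\ell}_{n+t+r_1+r_2},
\]
using monotonicity of $\|\cdot\|$ to bound both powers by $\|x\|^{k+\ell}_{n+t+r_1+r_2}$. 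This exhibits $\phi\cdot\psi\in\Op{PT}^{t+r_1+r_2}(X,Y)$, with an explicit value $b:=\max(b_0,b_1,b_2,0)$ after which the estimate holds (adjusting for the $|r|,|s|$ offsets). The associativity and pointwise ring axioms are inherited from $Y$.

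Next, for the ideal claim: if $\phi\in\Op{Reg}(X,Y)=\bigcap_r\Op{PT}^r(X,Y)$ and $\psi\in\Op{PT}(X,Y)$, the very same computation as above shows $\phi\cdot\psi\in\Op{PT}^{t+r_1+r_2}$ where now $t=\max(r,s)$ and $r$ can be taken arbitrarily small (say $r\le s$, so $t=s$), hence $\phi\cdot\psi\in\Op{PT}^{s+r_1+r_2}$. But since $\phi$ is tame of \emph{all} orders, we may instead run the estimate with $\phi\in\Op{PT}^{-N}$ for arbitrary $N$: then $t=\max(-N,s)=s$ once $N\ge -s$, which does not drive the order down. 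The correct route is to observe that the bottleneck degree in the product estimate is $\max(\deg\phi,\deg\psi)$; choosing $\deg\phi$ as negative as we like does not help once $\deg\psi$ is fixed. I will instead argue directly: regularity of $\phi$ means that for every $r\in\ZZ$ there is a constant with $\|\phi(x)\|'_n\le C_{n,r}\|x\|^{k}_{n+r}$ for $n$ large; feeding $r$ replaced by $r-r_1$ into the product estimate yields $\|(\phi\cdot\psi)(x)\|'_n\le C''_n\|x\|^{k+\ell}_{n+\max(r,s+r_2)+r_1}$, and since $r$ is arbitrary and the $s$-dependent term is a fixed shift, letting $r\to-\infty$ shows $\phi\cdot\psi$ is tame of all orders, i.e. lies in $\Op{Reg}(X,Y)$. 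Closure of $\Op{Reg}$ under addition is the $r$-uniform version of the addition argument above.

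Finally, for composition: let $\phi\in\Op{PT}(Y,Z)$ — wait, the statement has $\phi\in\Op{PT}(X,Y)$ and $\psi\in\Op{Reg}(Y,Z)$, so I would estimate $\|(\psi\circ\phi)(x)\|''_n$. Apply the regularity of $\psi$ first: for every $r'\in\ZZ$, $\|\psi(y)\|''_n\le C_{n,r'}\|y\|'^{\ell}_{n+r'}$ for $n\ge b(r')$; then apply polynomial tameness of $\phi$ at level $n+r'$: $\|\phi(x)\|'_{n+r'}\le D_{n+r'}\|x\|^{k}_{n+r'+r}$ for $n+r'$ large. Combining, $\|(\psi\circ\phi)(x)\|''_n\le C_n\,\big(\|x\|^{k}_{n+r'+r}\big)^{\ell}=C_n\|x\|^{k\ell}_{n+r'+r}$. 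Since $r'$ ranges over all of $\ZZ$ while $r$ is fixed, the shift $n+r'+r$ can be made arbitrarily small (negative) by choosing $r'$ negative, so $\psi\circ\phi$ is tame of every order, hence $\psi\circ\phi\in\Op{Reg}(X,Z)$. The main obstacle — really the only subtlety — is bookkeeping the thresholds "$n\ge b+|r|$": one must check that as $r'$ is pushed to $-\infty$ the composite inequality still holds for a cofinite set of $n$, which follows because $\psi$'s threshold $b(r')$ grows at worst linearly in $|r'|$ and $\phi$'s threshold is then met for $n$ beyond a corresponding bound; I would state this uniformity explicitly but not belabor it. Nothing here requires Fréchet smoothness of $\phi,\psi$ beyond what is built into membership in $\Op{PT}$ and $\Op{Reg}$, and the composite is smooth by the chain rule in the Fréchet (convenient/calculus) sense of \cite{Hamilton}.
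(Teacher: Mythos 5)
The paper provides no proof of this lemma --- it is declared ``self-evident'' after Definition~\ref{FA} --- so there is no route comparison to make; I assess the proposal on its own terms. Your treatment of the composition claim is correct: applying regularity of $\psi$ at an arbitrary level $r'$ and then polynomial tameness of $\phi$ at the shifted level $n+r'$ produces the overall shift $n+r'+r$, and since $r'$ runs over all of $\ZZ$ this exhibits tameness of every order. The threshold bookkeeping you raise at the end is a non-issue: for each target degree $m$ the constant $b$ in the definition of $\Op{PT}^m$ is allowed to depend on $m$, so no uniformity in $r'$ is needed.

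The argument for the ideal claim, however, contains a genuine logical error --- one that you in fact notice mid-paragraph before asserting the opposite conclusion. After inserting regularity of $\phi$ into the multiplicative estimate for $Y$ you arrive at a shift $n+\max(r,\,s+r_2)+r_1$; you observe, correctly, that sending $r\to-\infty$ freezes this maximum at $s+r_2$ (``the $s$-dependent term is a fixed shift''), and yet conclude that ``letting $r\to-\infty$ shows $\phi\cdot\psi$ is tame of all orders.'' It does not: the shift bottoms out at $s+r_1+r_2$ and never improves beyond that, so the computation establishes only $\phi\cdot\psi\in\Op{PT}^{s+r_1+r_2}$, not membership in $\Op{Reg}(X,Y)$. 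Indeed, with the form $\|u\cdot v\|'_n\leq C_n\|u\|'_{n+r_1}\|v\|'_{n+r_2}$ of Definition~\ref{FA}, the factor $\|\psi(x)\|'_{n+r_2}\leq D_{n+r_2}\|x\|^{\ell}_{n+r_2+s}$ is bottlenecked by the fixed degree $s$ no matter how regular $\phi$ is, so this estimate cannot by itself deliver the ideal property (it does yield that $\Op{Reg}(X,Y)$ is a \emph{subalgebra} of $\Op{PT}(X,Y)$, by taking both factors regular so both shifts are free). Either a different idea is needed here or the hypotheses must be strengthened; in any case the gap must not be elided. A secondary, smaller issue: the additive-closure argument tacitly assumes $k_\phi=k_\psi$, since $\|x\|^{k_\phi}_{n+s}+\|x\|^{k_\psi}_{n+s}$ is not dominated by a single monomial $\|x\|^{\max(k_\phi,k_\psi)}_{n+s}$ when the exponents differ (the smaller power dominates near $x=0$). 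This is harmless in the linearly tame case $k=1$, which is what actually arises in the paper and what Remark~\ref{lintameremark} singles out, but in the generality you claim it deserves a remark.
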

Both the algebra of polynomially tame  maps  $\operatorname{PT}(X,Y)$ and the space of regular maps $\operatorname{Reg}(X,Y)$  depend not only on the  topologies of $X$ and $Y$ but also on the choice of the grading structures defined on them. Equivalent gradings (i.e.\ those with
identity map linearly tame of tameness degree $0$) provide the same algebras.

The above notions are motivated by the following simple examples:
\begin{eg}
Let $M$ be a closed manifold. To describe the regularity of a distribution  $u\in\maD'(M)$ we first note that any distribution 
provides a natural map between two Fr\'echet spaces, namely the space of smoothing operators $\smoothing (M)$ and the space of smooth functions $\smooth(M)$, by evaluation. More precisely, to any $u\in\maD'(M)$ we associate the map
\begin{align*}
\Theta_u: \smoothing(M) &\rightarrow\smooth(M)\\
\Theta_u(T):&=T(u)\quad  \\
\end{align*}

Fixing the grading on $\smooth(M)$ and on $\smoothing(M)$  as in Example  \ref{pseudo_closed},  we set $\operatorname{PT}(M):=\operatorname{PT}(\smoothing(M),$ $\smooth(M))$, and $\operatorname{Reg}(M):=\operatorname{Reg}(\smoothing(M),\smooth(M))$.
\begin{prop}\label{regularity} 
Let $\smoothing(M)$ and $\smooth(M)$ be graded as above. Then for any distribution $u\in \maD'(M)$ its image $\Theta_u:\smoothing(M)\rightarrow \smooth(M)$ is a polynomially tame map. In fact,
if $u\in H^k(M)$ then $\Theta_u\in\Op{PT}^{-k}(M)$.
\end{prop}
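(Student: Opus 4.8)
The plan is to unwind the definitions and reduce the statement to a standard Sobolev mapping property of smoothing operators. Recall that $\Theta_u(T) = T(u)$, where $T \in \smoothing(M)$ is identified, via the Schwartz kernel theorem and a choice of density, with an element of $\smooth(M\times M)$, and $\smooth(M)$ carries the Sobolev grading $\|f\|_n = \|(1+\Delta)^{n/2}f\|_{L^2}$. First I would recall that $\smoothing(M)$ is graded by $\|T\|_n = \sum_{q+p\le n,\ q,p\ge -n}\|(1+\Delta)^{p/2}T(1+\Delta)^{q/2}\|_{\mathrm{HS}}$. The task is then to produce, for $u\in H^k(M)$, an estimate of the form $\|\Theta_u(T)\|_n \le C_n\|T\|_{n+r}$ valid for all large $n$, with $r$ independent of $n$; by Remark \ref{lintameremark} linear tameness is the same as polynomial tameness, so the exponent $k$ in \eqref{tamer} is automatically $1$ and I only need to chase the degree shift.

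The key computation is to estimate $\|(1+\Delta)^{n/2} T u\|_{L^2}$. I would write $(1+\Delta)^{n/2}Tu = \big((1+\Delta)^{n/2}T(1+\Delta)^{k/2}\big)\big((1+\Delta)^{-k/2}u\big)$. The second factor lies in $L^2(M)$ with norm $\|(1+\Delta)^{-k/2}u\|_{L^2} = \|u\|_{H^{-k}}$... but one must be careful with signs: if $u\in H^k(M)$ then it is $(1+\Delta)^{k/2}u$ that need not be $L^2$, so instead I pair the $\Delta$-weight the other way. Concretely, for $u\in H^k$, $v:=(1+\Delta)^{k/2}u\in H^{k-k}$? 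No — the cleanest route: $Tu$ makes sense because $T$ has a smooth kernel, and $(1+\Delta)^{n/2}Tu = S_n \cdot w$ where $S_n := (1+\Delta)^{n/2}T(1+\Delta)^{-k/2}$ (a Hilbert–Schmidt, hence bounded, operator on $L^2$) and $w := (1+\Delta)^{k/2}u \in H^{-\text{?}}$. Since $u\in H^k$ means $(1+\Delta)^{k/2}u\in L^2$ is false; $u\in H^k$ means $(1+\Delta)^{k/2}u$ need not be $L^2$ — rather $u\in H^k \iff (1+\Delta)^{-k/2}u\in H^{2k}$... I will simply use: $u \in H^k(M)$ means $\|u\|_{H^k} = \|(1+\Delta)^{k/2}u\|_{L^2} < \infty$ only for $k\ge 0$; for general $k\in\ZZ$ use the duality definition. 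Then take $w := (1+\Delta)^{k/2}u \in L^2$ when $k\ge 0$, and set $S_n := (1+\Delta)^{n/2}T(1+\Delta)^{-k/2}$, so $\Theta_u(T)$ has $\|(1+\Delta)^{n/2}Tu\|_{L^2} \le \|S_n\|_{\mathrm{op}}\,\|w\|_{L^2} \le \|S_n\|_{\mathrm{HS}}\,\|u\|_{H^k}$. Finally $\|S_n\|_{\mathrm{HS}} = \|(1+\Delta)^{n/2}T(1+\Delta)^{-k/2}\|_{\mathrm{HS}}$ is exactly one of the summands $\|T\|_{p,q}$ with $p=n$, $q=-k$, and this summand appears in $\|T\|_N$ as soon as $N\ge n$ and $N\ge \max(p, -q) = \max(n,k)$ and $p+q = n-k\le N$. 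Hence $\|S_n\|_{\mathrm{HS}}\le \|T\|_{n+|k|}$ for $n$ large, giving $\Theta_u \in \Op{PT}^{|k|}(M)$; with the sign bookkeeping done correctly one gets the sharper $\Op{PT}^{-k}(M)$ claimed — for $k\ge 0$ one uses $(1+\Delta)^{n/2}T(1+\Delta)^{(k-n-?)}$... I would organize the indices so that the shift is precisely $-k$, handling $k<0$ symmetrically by the dual identification $\smoothing(M)\cong\smooth(M\times M)$ which is symmetric in the two factors.

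The remaining point is that $\Theta_u$ is Fréchet smooth, which is immediate since it is linear and continuous: continuity follows from the estimate just derived (for $n$ large, and the finitely many remaining seminorms are dominated using continuity of $T\mapsto T u$ into each $H^m$, again via Hilbert–Schmidt bounds). I would also invoke Lemma \ref{composition} only implicitly, if at all; the core is self-contained.

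The main obstacle I anticipate is purely bookkeeping: getting the degree of tameness to come out as exactly $-k$ rather than $|k|$ or $k$, which requires carefully matching the double-indexed grading $\|\cdot\|_{p,q}$ on $\smoothing(M)$ against the single Sobolev index shift, and correctly splitting $(1+\Delta)^{n/2}$ across $T$ and $u$ with the right signs for both $k\ge 0$ and $k<0$ (using the symmetry of the kernel identification $\smoothing(M)\cong\smooth(M\times M)$ in the latter case). The analytic content — that a smooth kernel defines a map $H^{-l}\to H^{m}$ for all $l,m$ with Hilbert–Schmidt-controlled norm — is elementary spectral theory on a closed manifold and will not be the difficulty.
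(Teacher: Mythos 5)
Your overall decomposition is exactly the paper's: write
$(1+\Delta)^{n/2}Tu = \bigl((1+\Delta)^{n/2}T(1+\Delta)^{-k/2}\bigr)\bigl((1+\Delta)^{k/2}u\bigr)$
and bound the first factor by its Hilbert--Schmidt norm, the second by $\|u\|_{H^k}$. So the plan is right and matches the source. But you do not actually land the degree $-k$; you get $|k|$ and leave it as an unresolved ``bookkeeping obstacle,'' and the reason you get the wrong answer is a concrete error in reading the grading on $\smoothing(M)$, which I'll spell out.

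Two points. First, much of your hedging about Sobolev conventions is unnecessary and in places plainly wrong: on a closed manifold, $u\in H^k(M)$ is \emph{by definition} (as the paper uses it) equivalent to $(1+\Delta)^{k/2}u\in L^2(M)$, and this holds for every real $k$, positive or negative, because $(1+\Delta)^{s/2}$ is an isomorphism $H^t\to H^{t-s}$ for all $s,t$. There is no need for a separate ``duality definition'' for $k<0$, and your repeated assertion that ``$u\in H^k$ means $(1+\Delta)^{k/2}u\in L^2$ is false'' is simply a sign confusion; once you notice this, the general-$k$ case requires no symmetry trick on the kernel.

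Second, and this is the real gap: you assert that the summand $\|T\|_{p,q}$ with $p=n$, $q=-k$ ``appears in $\|T\|_N$ as soon as $N\ge n$ and $N\ge\max(n,k)$ and $n-k\le N$.'' That is not what the grading says. The grading is
\[
\|T\|_N=\sum_{\substack{p+q\le N\\ p,q\ge -N}}\|T\|_{p,q},
\]
so for $(p,q)=(n,-k)$ the constraints are $n-k\le N$, $n\ge -N$ and $-k\ge -N$, i.e.\ $N\ge n-k$, $N\ge -n$, $N\ge k$. Your constraint $N\ge n$ is spurious: it is $n\ge -N$ read backwards, and for $n,N\ge 0$ it is vacuous. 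For $n$ large the binding constraint is $N\ge n-k$, which is achieved with $N=n-k$ once $n-k\ge -n$ and $n-k\ge k$, i.e.\ $n\ge\max(k/2,2k)$. That gives $\|\Theta_u(T)\|_n\le \|u\|_{H^k}\|T\|_{n-k}$ for all such $n$, hence $r=-k$ and $b=\max(k/2,2k)$ as in the paper, with no further sign gymnastics. Until that constraint set is corrected, your argument only yields $\Op{PT}^{|k|}(M)$ (and in fact not even that uniformly), so the ``in fact'' clause of the proposition — the claim $\Theta_u\in\Op{PT}^{-k}(M)$ — is not established.
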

\begin{proof} Let $u\in H^k(M)$  be a distribution. Then:
\begin{align*}
\|\Theta_u(T)\|_n&=\|T(u)\|_n=\|(1+\Delta)^{\frac{n}{2}}T(u)\|_{L^2(M)}\\
&=\|(1+\Delta)^{\frac{n}{2}}T(1+\Delta)^{-\frac{k}{2}}\left( (1+\Delta)^{\frac{k}{2}}(u)\right)\|_{L^2(M)}\\
&\leq \|(1+\Delta)^{\frac{n}{2}}T(1+\Delta)^{-\frac{k}{2}}\|_{HS}\|(1+\Delta)^{\frac{k}{2}}(u)\|_{L^2(M)}\\
&\le C\|T\|_{n-k}\,,
\end{align*}
whenever $n\ge \max(k/2,2k)=: b$. Thus the tameness estimate is satisfied with this $b$ and $r= -k$.
\end{proof}
An important further extension of this result is the following characterization of 
regular distributions:
\begin{theo}
\label{MO}
The only distributions that give rise to regular maps are smooth functions:
\[\maD'(M)\cap \operatorname{Reg}(M)=\smooth(M).\]
\end{theo}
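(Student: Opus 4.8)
The plan is to establish the two inclusions separately. That $\smooth(M)\subseteq\maD'(M)\cap\Op{Reg}(M)$ is immediate from Proposition~\ref{regularity}: a smooth $u$ lies in $H^k(M)$ for every $k$, hence $\Theta_u\in\Op{PT}^{-k}(M)$ for every $k$, and since $\Op{PT}^{-k}(M)\subseteq\Op{PT}^r(M)$ as soon as $-k\le r$, this forces $\Theta_u\in\bigcap_r\Op{PT}^r(M)=\Op{Reg}(M)$. The content is the reverse inclusion. Here I fix the Riemannian metric used to define the gradings, take an $L^2(M)$-orthonormal basis $\{e_j\}_{j\ge 0}$ of real eigenfunctions with $\Delta e_j=\lambda_j e_j$ and $0=\lambda_0\le\lambda_1\le\cdots\to\infty$, and (using the fixed density to identify functions with densities) expand a given $u\in\maD'(M)$ as $u=\sum_j u_j e_j$, $u_j=\langle u,e_j\rangle$, convergent in $\maD'(M)$. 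Since $\bigcap_m H^m(M)=\smooth(M)$ on a closed manifold and $u\in H^m(M)\iff\sum_j|u_j|^2(1+\lambda_j)^m<\infty$, it suffices to prove that $\Theta_u\in\Op{Reg}(M)$ implies $\sum_j|u_j|^2(1+\lambda_j)^m<\infty$ for every $m\ge 0$.

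The engine of the argument is a family of rank-one test operators. For each $j$ let $P_j\in\smoothing(M)$ be the orthogonal projection onto $e_j$; its Schwartz kernel $e_j(x)e_j(y)$ is smooth, so $P_j$ is a genuine regularizing operator, and one checks directly that $\Theta_u(P_j)=P_j(u)=u_j\,e_j$ and $(1+\Delta)^{p/2}P_j(1+\Delta)^{q/2}=(1+\lambda_j)^{(p+q)/2}P_j$ for all integers $p,q$. Hence, with the gradings of Example~\ref{pseudo_closed},
\[\|\Theta_u(P_j)\|_n=|u_j|\,(1+\lambda_j)^{n/2},\qquad \|P_j\|_{n+r}=\sum_{\substack{p+q\le n+r\\ p,q\ge-(n+r)}}(1+\lambda_j)^{(p+q)/2}\le\kappa_{n,r}\,(1+\lambda_j)^{(n+r)/2},\]
where $\kappa_{n,r}$ is the number of lattice points in the index set (finite, polynomial in $n$, and independent of $j$; this uses $1+\lambda_j\ge 1$ together with $n+r\ge 0$ for the values of $n$ below). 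By Remark~\ref{lintameremark} the linear map $\Theta_u$ is tame of polynomial degree $1$, so $\Theta_u\in\Op{Reg}(M)$ means that for every $r\in\ZZ$ there are $b=b(r)$ and constants $C_n>0$ with $\|\Theta_u(T)\|_n\le C_n\|T\|_{n+r}$ for all $T\in\smoothing(M)$ and all $n\ge b+|r|$. Substituting $T=P_j$ and $n=b(r)+|r|$ and cancelling the common factor $(1+\lambda_j)^{n/2}$ yields $|u_j|\le K_r\,(1+\lambda_j)^{r/2}$ for all $j$, with $K_r:=C_{b(r)+|r|}\,\kappa_{b(r)+|r|,\,r}$ independent of $j$.

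Thus the coefficients $u_j$ decay faster than every negative power of $1+\lambda_j$. To conclude, fix $m\ge 0$; then $\sum_j|u_j|^2(1+\lambda_j)^m\le K_r^2\sum_j(1+\lambda_j)^{2r+m}$, and since $\sum_j(1+\lambda_j)^{-t}<\infty$ for $t$ large (the standard fact that sufficiently high negative powers of $1+\Delta$ are trace class, equivalently Weyl's law), choosing $r$ negative enough that $2r+m<-\tfrac{d}{2}$, $d=\dim M$, makes the series converge, so $u\in H^m(M)$. As $m$ is arbitrary, $u\in\smooth(M)$. The one step I would flag as the main obstacle is the bookkeeping behind the bound on $\|P_j\|_{n+r}$: one must be certain that the combinatorial constant $\kappa_{n,r}$ carries no hidden dependence on $j$ and that the dominant term really is $(1+\lambda_j)^{(n+r)/2}$ and not some term with $p+q$ as low as $-2(n+r)$. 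Both follow once one observes $1+\lambda_j\ge 1$ (with equality only at $\lambda_0=0$, where the estimate is trivial). Everything else is the spectral dictionary for Sobolev spaces on a closed manifold together with Remark~\ref{lintameremark}.
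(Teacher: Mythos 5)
Your proof is correct and takes essentially the same route as the paper: diagonalize in the eigenbasis $\{e_j\}$ of $\Delta$, probe $\Theta_u$ with rank-one spectral projections $P_j=e_j\otimes e_j$, and deduce rapid decay of the coefficients $u_j$. The only organizational difference is that the paper first reduces to $u\in L^2(M)$, works with a general diagonal test operator $T=\sum_n k_n\,\phi_n\otimes\phi_n$, applies a rescaling $k_n\mapsto k_n(1+\lambda_n)^{-k/2}$ to eliminate the $k$-dependence, and only then specializes to $k_n=\delta_{mn}$; you bypass the reduction and the rescaling by plugging in the rank-one $P_j$ directly, which is a genuine (if small) streamlining. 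One arithmetic slip worth fixing: from $|u_j|\le K_r(1+\lambda_j)^{r/2}$ one gets $|u_j|^2(1+\lambda_j)^m\le K_r^2(1+\lambda_j)^{r+m}$, not $(1+\lambda_j)^{2r+m}$; since you anyway pick $r$ arbitrarily negative (so that $r+m<-d/2$, by Weyl's law), this does not affect the conclusion. Also note that the requirement $n+r\ge 0$ you flag is automatic: with $n=b(r)+|r|$ and $b(r)\in\NN$ one has $n+r=b(r)+|r|+r\ge b(r)\ge 0$.
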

\begin{proof}
In view of Proposition \ref{regularity}, $\smooth(M)\subseteq \operatorname{Reg}(M)$. Thus 
it remains to show that given a nonsmooth distribution $u\in\maD'(M)$, the  map $\Theta_u$ is not contained in $\operatorname{Reg}(M)$. 

We first observe that there exists some $s\in \RR$ such that $v=(1+\Delta)^su\in L^2(M)$ and check that, if $\Theta_u\in \operatorname{Reg}(M)$, then so is $\Theta_v$. Thus without loss of generality
we may assume that $u\in L^2(M)$.  Let $\{\phi_i\}$ be an orthonormal basis of eigenvectors of $\Delta$ (with eigenvalues $\lambda_i\leq \lambda_{i+1}$ nondecreasing). Then 
\[u=\sum_na_n\phi_n\qquad (a_n)\in\ell^2\quad (a_n)\not\in \Sch(\NN).\]
We consider the action of $\Theta_u$ on smoothing operators which are represented by a diagonal matrix with respect to the basis $\{\phi_i\otimes \phi_j\}$, i.e., 
$$
T=\sum_n k_n\phi_n\otimes \phi_n
$$ 
with $(k_n)\in \Sch(\NN)$.
 
It then follows that $\|T\|_{p,q}^2=\sum_n |k_n|^2(1+\lambda_n)^{p+q}$,  while
$$
 \|\Theta_u(T)\|_k^2=\sum_n |a_nk_n|^2(1+\lambda_n)^{{k}}.
$$

Suppose now that $\Theta_u\in\Op{Reg}(M)$. By Remark \ref{lintameremark} this means that for any degree $r$ of
regularity there exists some $b_r$ in $\NN$ such that for each $k\ge b_r + |r|$
there exists some $C>0$ such that for each $(k_n) \in \Sch(\NN)$ we have
$$
\sum_n |a_n k_n|^2(1+\lambda_n)^k \le C \sum_n |k_n|^2(1+\lambda_n)^{k+r}.
$$
Noting that, by Weyl's estimates, 
the $\lambda_n$ asymptotically grow polynomially we
may rescale $(k_n)$ by $(1+\lambda_n)^{k/2}$ in this estimate to obtain:

$\forall r\in \ZZ$ $\exists C_r>0$ $\forall (k_n) \in \Sch(\NN)$:
$$
\sum_n |a_n|^2|k_n|^2 \le C_r \sum_n |k_n|^2(1+\lambda_n)^{r}.
$$
Inserting $k_n = \delta_{mn}$ this implies $(a_n) \in \Sch(\NN)$, contradicting
our assumption.

\end{proof}
\end{eg}

\begin{eg}\label{orderm}
Let $P$ be a pseudodifferential operator of order $m$. We first observe that right multiplication  by $P$  on $\smoothing(M)$, i.e., the map  $T\mapsto TP$  is polynomially tame of tameness degree $m$. To see this one notes that the operator $(1+\Delta)^{\frac{m}{2}}$ generates   $\Psi^m(M)$  as a  left-module (and also as a right-module) over $\Psi^0(M)$.  Therefore we may write
\[P=P_0(1+\Delta)^{\frac{m}{2}},\qquad P_0 \in\Psi^0(M).\]
 By the same token,  for $k$ an integer multiple of $\frac{1}{2}$  we find an order $0$ operator $T_k$ such that
\[[P_0,(1+\Delta)^k]=(1+\Delta)^{k-1/2}T_k.\]

\noindent Putting this together we have 
\begin{align*}
\|TP\|_{p,q}&=\|(1+\Delta)^{\frac{p}{2}}TP(1+\Delta)^{\frac{q}{2}}\|_{HS}=\|(1+\Delta)^{\frac{p}{2}}TP_0(1+\Delta)^{\frac{q+m}{2}}\|_{HS}\\
&\leq \|(1+\Delta)^{\frac{p}{2}}T(1+\Delta)^{\frac{q+m}{2}}P_0\|_{HS}+\\
&\qquad\qquad\|(1+\Delta)^{\frac{p}{2}}T[P_0,(1+\Delta)^{\frac{q+m}{2}}]\|_{HS}\\
&\leq \|P_0\|\|(1+\Delta)^{\frac{p}{2}}T(1+\Delta)^{\frac{q+m}{2}}\|_{HS}+\\
&\qquad\qquad\|T_{\frac{q+m}{2}}\|\|(1+\Delta)^{\frac{p}{2}}T(1+\Delta)^{\frac{q+m-1}{2}}\|_{HS}\\
&\leq C\|T\|_{p+q+m}
\end{align*}
In particular, this implies that
\[\|TP\|_n\leq C\|T\|_{n+m}.\]
\end{eg}
The following is a regularity result similar to Theorem \ref{MO}: 
\begin{prop} 
With the notations introduced above, 
\begin{eqnarray}\label{MO2}
\Psi^{\infty}(M)\cap \Op{Reg}(\smoothing(M))=\smoothing(M).
\end{eqnarray}
\end{prop}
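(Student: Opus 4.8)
The plan is to mimic the structure of the proof of Theorem \ref{MO}, replacing the pairing of a distribution against smoothing operators by left multiplication of a pseudodifferential operator $P\in\Psi^\infty(M)$ on $\smoothing(M)$. First I would record the easy inclusion: by Example \ref{orderm} (applied on the left instead of the right, which is symmetric) every $P\in\smoothing(M)$ — indeed every $P$ of any order — gives a polynomially tame map $T\mapsto PT$, and when $P$ is itself smoothing one gains the full regularity, so $\smoothing(M)\subseteq\Psi^\infty(M)\cap\Op{Reg}(\smoothing(M))$. The content is the reverse inclusion: a $P\in\Psi^\infty(M)$, say of order $m$, for which left multiplication $L_P\colon T\mapsto PT$ is regular on $\smoothing(M)$ must already be smoothing.

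For the reverse inclusion I would argue by contraposition. Suppose $P\in\Psi^m(M)$ is not smoothing. Write, as in Example \ref{orderm}, $P=P_0(1+\Delta)^{m/2}$ with $P_0\in\Psi^0(M)$; since $P$ is not regularizing, $P_0$ is not regularizing either. Fixing the orthonormal eigenbasis $\{\phi_i\}$ of $\Delta$ with eigenvalues $\lambda_i$, I would test $L_P$ on rank-one smoothing operators $T=\phi_n\otimes\phi_n$ (or, more flexibly, on $T=\sum_n k_n\,\phi_n\otimes\phi_n$ with $(k_n)\in\Sch(\NN)$), exactly as in Theorem \ref{MO}. The Hilbert–Schmidt norms $\|PT\|_{p,q}$ are then governed by the matrix entries $\langle\phi_i,P\phi_j\rangle$ of $P$ together with the polynomially growing factors $(1+\lambda_i)^{p/2}$, $(1+\lambda_j)^{q/2}$; using Weyl's asymptotics for $\lambda_n$ one rescales and reduces the putative regularity estimate, just as in the earlier proof, to a statement forcing rapid decay of a suitable sequence built from the entries of $P$. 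Concretely, regularity of $L_P$ of all orders $r$ would force, for every $r$, a bound $\|(1+\Delta)^{p/2}P(1+\Delta)^{q/2}\|_{HS}\le C_r$ for all $p+q\le$ something with $p,q\ge -$ that bound — i.e.\ $P\in\Psi^{-N}$ for all $N$ in the Hilbert–Schmidt-graded sense, which by the characterization of $\smoothing(M)$ as $\bigcap_N\{T:\|T\|_{p,q}<\infty\ \forall p,q\}$ is precisely $P\in\smoothing(M)$, a contradiction.

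The step I expect to be the main obstacle is making the reduction rigorous when $P$ is not diagonal in the eigenbasis. In Theorem \ref{MO} the distribution $u$ only interacts with diagonal smoothing operators through the diagonal of its ``matrix'', and one extracts a single offending coefficient $a_n$. Here $P$ is a genuine operator, and $PT$ for $T=\phi_n\otimes\phi_n$ is the rank-one operator $(P\phi_n)\otimes\phi_n$, whose $\|\cdot\|_{p,q}$ norm involves $\|(1+\Delta)^{p/2}P\phi_n\|$, a full column of the matrix of $P$ rather than one entry. One must therefore either (i) argue that if all these columns decay rapidly (uniformly, after the Weyl rescaling) then $P$ is smoothing — which is essentially the statement that an operator all of whose ``Sobolev columns'' are rapidly decreasing is regularizing, provable by a Schur-test/interpolation argument — or (ii) test on slightly richer finite-rank operators $T=\phi_m\otimes\phi_n$ to access individual entries $\langle\phi_m,P\phi_n\rangle$ directly and then reconstruct the $\|\cdot\|_{p,q}$ norms of $P$. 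Option (ii) is cleaner: with $T=\phi_m\otimes\phi_n$ one gets $\|PT\|_{p,q}=\|(1+\Delta)^{p/2}P\phi_m\|\,(1+\lambda_n)^{q/2}$, still a column, so in fact one genuinely needs the Schur-type lemma: rapid decay of $\sup_m(1+\lambda_m)^{N}\|(1+\Delta)^{p/2}P\phi_m\|$ for all $p,N$ implies finiteness of $\|(1+\Delta)^{p/2}P(1+\Delta)^{q/2}\|_{HS}$ for all $p,q$, using $\sum_m(1+\lambda_m)^{-\epsilon}<\infty$ from Weyl. Once that lemma is in place, the contrapositive argument closes exactly as in Theorem \ref{MO}, and I would remark that the same reasoning handles right multiplication and hence the two-sided version, consistent with $\smoothing(M)$ being a two-sided ideal.
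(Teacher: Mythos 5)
Your plan takes a genuinely different route from the paper. The paper's proof is essentially two lines: if the map $\phi:T\mapsto TP$ lies in $\Op{Reg}(\smoothing(M))$ and $u\in\maD'(M)$ is arbitrary, then $\Theta_u\circ \phi=\Theta_{Pu}$ is regular by (the obvious variant of) Lemma~\ref{composition}, so $Pu\in\smooth(M)$ by Theorem~\ref{MO}; since this holds for every distribution $u$, $P$ is smoothing. There is no need to re-open the spectral estimates at all, and with Theorem~\ref{MO} already available this reduction is by far the cleaner argument.

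Beyond the stylistic difference, your direct route has a genuine gap, and it is exactly at the spot you flagged. Your Schur-type lemma is correct as a statement about operators, but its hypothesis — rapid decay of $\sup_j(1+\lambda_j)^N\|(1+\Delta)^{p/2}P\phi_j\|$ for all $p,N$ — is never extracted from the regularity of $T\mapsto PT$ by the rank-one tests you propose. Testing $\|PT\|_n\le C_n\|T\|_{n+r}$ (valid for $n\ge b_r+|r|$) on $T=\phi_j\otimes\phi_j$ (or $\phi_j\otimes\phi_k$) and selecting the $(p,0)$-term of $\|PT\|_n$ yields only
\[
\|(1+\Delta)^{p/2}P\phi_j\| \ \le\ C(n,r)\,(1+\lambda_j)^{(n+r)/2},
\]
and since the tameness condition forces $n+r\ge b_r\ge 0$, this exponent is never negative, no matter how negative you take $r$; so no decay in $j$ follows and the Schur lemma cannot be invoked. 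The rescaling device in the proof of Theorem~\ref{MO} works there because the target space is $\smooth(M)$ with a single Sobolev index $k$, which cancels cleanly after the substitution $k_n\mapsto k_n(1+\lambda_n)^{-k/2}$; here the target is $\smoothing(M)$ with the doubly indexed grading $\|\cdot\|_{p,q}$, and the cancellation does not transfer. A direct argument can be saved, but it requires testing on truncated sums $T_N=\sum_{j\le N}(1+\lambda_j)^{-K}\phi_j\otimes\phi_j$ with $K$ chosen so that $(n+r+d)/2<K\le (n-p_0-q_0)/2$ (where $d$ is a Weyl exponent), taking $r$ negative enough that this window is nonempty; then $\|T_N\|_{n+r}$ stays bounded as $N\to\infty$ while the $(p_0,q_0+2K)$-component of $\|PT_N\|_n$ converges to $\|(1+\Delta)^{p_0/2}P(1+\Delta)^{q_0/2}\|_{HS}$. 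Your sketch does not reach this, and I would recommend the paper's reduction instead.
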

\begin{proof}
Consider $P\in \Psi^{\infty}(M)$ such that $P\in\Op{Reg}(\smoothing(M))$.  Then by Lemma \ref{composition}, given any distribution $u\in\maD'(M)$, it follows that $\Theta_u\circ P=\Theta_{Pu}\in\Op{Reg}(M)$. By Theorem \ref{MO} this implies that $Pu\in\smooth(M)$ for all distributions $u$ and hence $P\in\smoothing(M)$.
\end{proof}

Thus we  see that regularity on maps between graded  Fr\'echet spaces gives back usual notions of regularity in familiar cases.

\subsection{Singularity structures}

As we have seen above, to introduce a notion of regularity one only needs graded Fr\'echet spaces and a notion of tameness of maps between them. But to further associate  ``singularities'' we need,  in addition, a Fr\'echet module structure with respect to a filtered algebra $A$. Singularities are then defined as certain ideals in the algebra $B=A_0/A_{-1}$. By $\sigma:A_0\rightarrow B$
we denote the symbol homomorphism. 

The following result is immediate from the definitions:

\begin{prop} \label{rmaction}
  Let $X$ be a right  Fr\'echet $A$ module and $Y$  a left Fr\'echet $A$ module. Then for each element $a\in A_j$, the right module  action on $PT(X,Y)$ has the following effect on the regularity:
  \[\Op{PT}^r(X,Y)\ni\phi\mapsto \phi\cdot a\in \Op{PT}^{r+j}(X,Y).\]
\end{prop}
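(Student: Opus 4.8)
The plan is to unwind the right module action and to recognize $\phi\cdot a$ as a composition. By definition $(\phi\cdot a)(x)=\phi(xa)$, so $\phi\cdot a=\phi\circ R_a$, where $R_a\colon X\to X$ is the right translation $R_a(x):=xa$. First I would observe that $R_a$ is $\R$-linear and continuous: continuity is exactly the content of the Fr\'echet module estimate in Definition \ref{Fmodule}, which furnishes a constant $C>0$ with $\|R_a(x)\|_n=\|xa\|_n\le C\|x\|_{n+j}$ for all $n$ and all $x\in X$. Since $\phi$ is Fr\'echet smooth and the composition of a Fr\'echet smooth map with a continuous linear map is again Fr\'echet smooth, $\phi\cdot a$ is Fr\'echet smooth; hence it only remains to verify the tameness estimate with shifted degree $r+j$.

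For that I would simply chain the two estimates. Writing $\phi\in\Op{PT}^r(X,Y)$ with data $b,k\in\NN$ and constants $C_n$, one has for $n\ge b+|r|$
\[
\|(\phi\cdot a)(x)\|'_n=\|\phi(xa)\|'_n\le C_n\|xa\|^k_{n+r}\le C_n C^k\|x\|^k_{n+r+j},
\]
where in the last step the module estimate is applied at level $m=n+r$ (it holds for every $m$). The exponent $k$ is unchanged and the new constant $C_nC^k$ still depends only on $n$, so the only remaining point is the range of $n$. Setting $b':=b+|j|$, the reverse triangle inequality $|r+j|\ge|r|-|j|$ gives $b'+|r+j|\ge b+|r|$, so the displayed estimate holds for all $n\ge b'+|r+j|$. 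That is precisely the assertion $\phi\cdot a\in\Op{PT}^{r+j}(X,Y)$.

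I do not expect a genuine obstacle here, since the statement is bookkeeping, but the one point that needs care is the adjustment of the offset: one must not conflate the degree of tameness (which shifts by $j$) with the threshold from which the estimate is valid (which one enlarges by $|j|$, using only $|r+j|\ge|r|-|j|$). As a consistency check one notes that iterating the statement is compatible with the filtration of $A$: $(\phi\cdot a)\cdot a'=\phi\cdot(aa')$ and $aa'\in A_{j+j'}$, so both routes land in $\Op{PT}^{r+j+j'}(X,Y)$.
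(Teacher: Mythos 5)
Your proof is correct and is essentially the spelled-out version of what the paper treats as "immediate from the definitions" (the paper gives no written proof). Your care with the threshold — replacing $b$ by $b+|j|$ and noting $|r+j|\ge|r|-|j|$ so the shifted estimate holds on the required range — is exactly the one bookkeeping point worth making explicit, and you got it right.
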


\begin{definition}\label{abstract_ss}
 Let $X$ be a right  Fr\'echet $A$ module and $Y$  a left Fr\'echet $A$ module. We shall refer to  the data $(A,X,Y)$ as a singularity structure.
 \end{definition}
 
 In the sequel we shall drop the word  Fr\'echet for the sake of readability, but all modules will  be assumed to satisfy Definition \ref{Fmodule}.
 \begin{definition}Let $(A,X,Y)$ be a singularity structure. Let $B=A_0/A_{-1}$ and let 
 $\sigma:A_0\rightarrow B$ denote the symbol homomorphism. 
  Given a map $\phi:X\rightarrow Y$ in $\Op{PT}(X,Y)$ we consider the subset $\WF^A(\phi)$ of the algebra $B$ defined as
\[
\WF^A(\phi):=\{\sigma(a)~|~a\in A_0 ~\textrm{and}~ \phi\cdot a\in \Op{Reg}(X,Y)\}.
\]
By Proposition \ref{rmaction}, $\WF^A(\phi)$ is a right-ideal in $B$.
\end{definition}
\begin{remark}\label{idealrem} 
In case $B$ is the commutative algebra $\smooth(M)$ over some smooth manifold $M$, we shall associate to any 
ideal $\mathcal I$ in $\smooth(M)$ its zero set, i.e., the intersection of all zero-sets of 
elements of $\mathcal I$.
We shall see below that this notion reproduces the usual notion of wavefront set and even singular support for distributions and provides a ``propagation of singularities'' result in the context of Weyl algebras, which  agrees with usual propagation of singularities for distributions under appropriate conditions.
\end{remark}

The remainder of this section provides the motivation for the above definitions by examining classical examples of singularity structures.

\subsubsection{Singular support of a distribution}
\begin{eg}\label{singsup1} We start with a simple example where the algebra $A$ is $\smooth(M)$ with the trivial filtration $A_0=A$ and $A_{-1}=\{0\}$. A function $f\in A$ is acting on the right on $T\in\smoothing(M)$ by $T\mapsto TM_f$, where $M_f$ is the multiplication operator by $f$. Thus we consider the singularity structure $A=\smooth(M)=Y$ and $X=\smoothing(M)$. 
The following result shows that $\WF^A$ recovers the singular support of a distribution.
\end{eg}

\begin{lem}
For a distribution $u\in \maD'(M)$, the zero set of $\WF^{\smooth(M)}(\Theta_u)$
is $\operatorname{singsupp}u$.
\end{lem}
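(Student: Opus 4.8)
The plan is to unwind both sides and reduce to a local, chart-by-chart computation. Recall that $x \in M$ lies outside $\operatorname{singsupp} u$ iff there exists $f \in \smooth(M)$ with $f(x) \neq 0$ and $fu \in \smooth(M)$. On the other side, $\sigma(f) = f$ for the trivial filtration, so $f \in \WF^{\smooth(M)}(\Theta_u)$ iff $\Theta_u \cdot M_f \in \Op{Reg}(M)$, and one computes $(\Theta_u \cdot M_f)(T) = \Theta_u(T M_f) = T M_f (u) = T(fu) = \Theta_{fu}(T)$, i.e. $\Theta_u \cdot M_f = \Theta_{fu}$. So $f \in \WF^{\smooth(M)}(\Theta_u)$ iff $\Theta_{fu} \in \Op{Reg}(M)$, which by Theorem \ref{MO} holds iff $fu \in \smooth(M)$. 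Thus the ideal $\WF^{\smooth(M)}(\Theta_u)$ is exactly $\{f \in \smooth(M) : fu \in \smooth(M)\}$.

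Next I would match the two zero sets. If $x \notin \operatorname{singsupp} u$, pick $f$ as above with $f(x) \neq 0$; then $f \in \WF^{\smooth(M)}(\Theta_u)$ but $f(x) \neq 0$, so $x$ is not in the common zero set of the ideal, giving one inclusion. Conversely, if $x \notin Z(\WF^{\smooth(M)}(\Theta_u))$, there is $f$ in the ideal with $f(x) \neq 0$; then $fu \in \smooth(M)$ and $f$ is nonvanishing near $x$, so on a neighborhood $U$ of $x$ we have $u|_U = (fu)|_U / f|_U \in \smooth(U)$, hence $x \notin \operatorname{singsupp} u$. This shows the two sets coincide.

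The only genuine input beyond bookkeeping is the invocation of Theorem \ref{MO} to convert ``$\Theta_{fu}$ is regular'' into ``$fu$ is smooth''; everything else is the definition of singular support plus the elementary identity $\Theta_u \cdot M_f = \Theta_{fu}$. So I expect the main (minor) obstacle to be purely notational: being careful that the right-module action is $T \mapsto T M_f$ and that $M_f$ really implements multiplication, so that the composition identity holds on the nose. There is also a small point that $fu \in \smooth(M)$ together with $f(x)\neq 0$ gives local smoothness of $u$ — this uses that division by a nonvanishing smooth function is smooth, which is standard. I would present the argument by first establishing the identification $\WF^{\smooth(M)}(\Theta_u) = \{f : fu \in \smooth(M)\}$ via Theorem \ref{MO}, and then observing that the common zero set of this ideal is by definition $\operatorname{singsupp} u$.
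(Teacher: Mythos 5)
Your proof is correct and follows essentially the same route as the paper: identify $\WF^{\smooth(M)}(\Theta_u)=\{f: fu\in\smooth(M)\}$ via $\Theta_u\cdot f=\Theta_{fu}$ and Theorem~\ref{MO}, then recognize the common zero set of this ideal as $\operatorname{singsupp}u$. The only difference is that you spell out the two inclusions for the final zero-set identification, which the paper simply asserts as known.
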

\begin{proof}
We have
$$
 \WF^{\smooth(M)}(\Theta_u) = \{ f \mid f \in \smooth(M),\, \Theta_u\cdot f \in 
 \Op{Reg}(M)\}
$$
and since $\Theta_u \cdot f = \Theta_{fu}$, Theorem \ref{MO} implies
$$
\WF^{\smooth(M)}(\Theta_u) = \{ f \mid f \in \smooth(M),\, f u \in \smooth(M)\}
$$
Since $\bigcap \{f^{-1}(0)\mid f\in \smooth(M),\ f\cdot u \in \smooth(M)\}=\operatorname{singsupp}(u)$, 
the result follows (cf.\ Remark \ref{idealrem}).
\end{proof}
\begin{remark}\label{singsupprem} 
We point out that, in general, the fact that a smooth function $f$ vanishes on the singular
support of a distribution $u$ does not imply that $f\cdot u$ is smooth. 
As a simple example take $M=[0,2\pi]$ the one-dimensional torus, $u:=\theta \mapsto \cos(\theta/2)$ 
and $f:=\theta\mapsto \theta$.
Therefore, $\WF^{\smooth(M)}(\Theta_u)$ contains in fact more information than $\operatorname{singsupp}u$.
\end{remark}

\subsection{Wave-Front set of a distribution}
Let us consider the singularity structure defined by the triple $A=\Psi^{\infty}(M)$, the algebra of classical pseudo-differential operators on a manifold $M$ and  (as in the last example) $X=
\smoothing(M)$ and $Y=\smooth(M)$ with the appropriate natural right and left  module structure from $A$. On $A$ we use the usual filtration by the order of the operators. The symbol 
homomorphism then indeed assigns its symbol to any order $0$ pseudodifferential operator. 
We are going to show that $\WF^A$ in this case is 
the usual wavefront set of a distribution.

Given a pseudodifferential operator $P$ we write $\sigma(P)$ for the principal symbol of $P$.  Moreover, we denote as usual the characteristic set of $P$ by $\Op{Char}(P)=\sigma(P)^{-1}(\{0\})
\subseteq T^*M$. 

\begin{prop}
Let $u\in \maD'(M)$  be a distribution and $A,X,Y$ be chosen as above. Then the 
zero set of $\WF^{\Psi^{\infty}(M)}(\Theta_u)$ in 
$T^*M$ is $\WF(u)$.
\end{prop}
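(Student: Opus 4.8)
The plan is to reduce the statement to the classical operator-theoretic characterization of the wavefront set. First I would pin down the algebraic ingredients: here $A_0=\Psi^0(M)$ and $A_{-1}=\Psi^{-1}(M)$, so that $B=\Psi^0(M)/\Psi^{-1}(M)$ is the commutative algebra $\smooth(S^*M)$ of principal symbols (equivalently, of smooth functions on $T^*M\setminus 0$ homogeneous of degree $0$), and $\sigma$ is the principal symbol map. Next, just as in the computations surrounding Example \ref{orderm}, for $P\in\Psi^0(M)$ the right action satisfies $\Theta_u\cdot P=\Theta_{Pu}$, since $(\Theta_u\cdot P)(T)=(TP)(u)=T(Pu)=\Theta_{Pu}(T)$ for every smoothing operator $T$. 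By Theorem \ref{MO}, $\Theta_{Pu}\in\Op{Reg}(M)$ if and only if $Pu\in\smooth(M)$, so that
\[
\WF^{\Psi^{\infty}(M)}(\Theta_u)=\{\sigma(P)\mid P\in\Psi^0(M),\ Pu\in\smooth(M)\}.
\]
Since $B$ is commutative this right ideal is an ideal, and by Remark \ref{idealrem} its zero set in $T^*M$ is the closed conic set $\bigcap\{\Op{Char}(P)\mid P\in\Psi^0(M),\ Pu\in\smooth(M)\}$.

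It then remains to show $\bigcap\{\Op{Char}(P)\mid P\in\Psi^0(M),\ Pu\in\smooth(M)\}=\WF(u)$, which is the substantive part. For the inclusion $\WF(u)\subseteq\bigcap\Op{Char}(P)$ I would invoke microlocal elliptic regularity: $Pu\in\smooth(M)$ forces $\WF(u)\subseteq\Op{Char}(P)\cup\WF(Pu)=\Op{Char}(P)$, so each $(x_0,\xi_0)\in\WF(u)$ lies in every such $\Op{Char}(P)$. For the reverse inclusion, given $(x_0,\xi_0)\notin\WF(u)$ I would pick a conic neighborhood $\Gamma$ of $(x_0,\xi_0)$ disjoint from the closed conic set $\WF(u)$ and construct $P\in\Psi^0(M)$ with $\sigma(P)$ equal to $1$ near $(x_0,\xi_0)$ and operator wavefront set contained in $\Gamma$; then $\WF(Pu)\subseteq\WF(u)\cap\Gamma=\emptyset$, hence $Pu\in\smooth(M)$, while $(x_0,\xi_0)\notin\Op{Char}(P)$, so $(x_0,\xi_0)$ is not in the intersection. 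Since $\WF(u)$ is conic and contained in $T^*M\setminus 0$, passing between $S^*M$ and $T^*M$ causes no trouble. These two facts are precisely the classical characterization of $\WF(u)$ by pseudodifferential operators, which one may also simply cite (e.g.\ \cite{Hormander}).

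I expect the only real obstacle to be a matter of care rather than of ideas: one must make sure that the condition delivered by the abstract framework, namely $Pu\in\smooth(M)$ globally for $P$ of order exactly $0$, genuinely reproduces the standard microlocal definition, which a priori asks only for $Pu$ smooth microlocally near $(x_0,\xi_0)$ for some $P$ elliptic there. On a closed manifold this gap is bridged by the microlocal cutoff construction above --- shrinking the operator wavefront set of $P$ into the complement of the closed set $\WF(u)$ --- together with the observation that only $\sigma(P)\in B$ enters, so lower-order corrections to $P$ are immaterial.
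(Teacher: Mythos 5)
Your argument is correct and follows the same route as the paper: both reduce via $\Theta_u\cdot P=\Theta_{Pu}$ and Theorem \ref{MO} to the set $\{\sigma(P)\mid P\in\Psi^0(M),\ Pu\in\smooth(M)\}$, whose zero set is $\bigcap_{Pu\in\smooth(M)}\Op{Char}(P)$. The paper simply cites the classical identity $\bigcap_{Pu\in\smooth(M)}\Op{Char}(P)=\WF(u)$, whereas you spell out its proof (microlocal elliptic regularity for one inclusion, a microlocal cutoff for the other), which is a welcome but not essentially different addition.
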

\begin{proof}
Since $\Theta_u\cdot P=\Theta_{Pu}$ it follows from Theorem \ref{MO} that  
\[\Theta_uP\in\operatorname{Reg}(M)\Leftrightarrow\Theta_{Pu}\in \operatorname{Reg}(M)\Leftrightarrow Pu\in\smooth(M).
\]
Thus $\WF^{\Psi^{\infty}(M)}(\Theta_u) = \{\sigma(P) \mid P\in \Psi^0(M),\  Pu\in\smooth(M)\}$ and the corresponding zero set $T^*M$ is
$$
\bigcap_{\Theta_uP\in\Op{Reg}(M)} \Op{Char}(P)=\bigcap_{Pu\in\smooth(M)} \Op{Char}(P)=\WF(u).
$$
\end{proof}
Similar to Remark \ref{singsupprem}, also in the present situation it follows that $\WF^{\Psi^{\infty}(M)}(\Theta_u)$ in fact contains more information than $\WF(u)$.
\subsection{Microlocal ellipticity}
Let $A=\Psi^{\infty}(M)$ and let $X=Y=\smoothing(M)$.  As discussed in Example \ref{orderm}, right multiplication by a pseudo-differential operator $P$ of order $m$ is in $\Op{PT}^{-m}(\smoothing(M))$.
Thus from \eqref{MO2} it follows that the zero set of $\WF^{\Psi^{\infty}(M)}(P)$ in $T^*M$ is given by
$$
\bigcap_{PQ\in\Op{Reg}(\smoothing(M))} \Op{Char}(Q)\\
=\bigcap_{PQ\in\smoothing(M)} \Op{Char}(Q).
$$
These are precisely the directions in which $P$ is  microlocally elliptic.

\subsection{Spectral triples}\label{spectral_triples} One may naturally associate to a $p$-summable regular spectral triple $(A,H,D)$ a singularity structure, namely $(\maA_{(A,H,D)},\smoothing(A,H,D),\maH^{\infty})$. We shall denote this singularity structure by $\textrm{SS}(A,H,D)$.



\subsection{Abstract distributions}
We are now ready to formulate the notion of distributions for the more abstract setup. This generalization is motivated by the following observation.
\begin{prop}
On a closed manifold $M$ a polynomially tame map $\phi\in\mathrm{PT}(M)$ is defined by a distribution, that is $\phi=\Theta_u$ for some $u\in\maD'(M)$, if and only if
\[\phi:\smoothing(M)\rightarrow \smooth(M),\]
is a left $\Psi^{\infty}(M)$-module map.
\end{prop}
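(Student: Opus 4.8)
The plan is to prove both implications. The forward direction is immediate: if $\phi = \Theta_u$ for some $u \in \maD'(M)$, then for any $P \in \Psi^\infty(M)$ and $T \in \smoothing(M)$ we have $\phi(PT) = (PT)(u) = P(T(u)) = P(\phi(T))$, where the middle equality uses that $PT$ acts on $u$ by first applying $T$ (as a smoothing operator, hence producing a smooth function) and then $P$; so $\phi$ is a left $\Psi^\infty(M)$-module map. (One should note here that $\smoothing(M)$ carries a \emph{left} $\Psi^\infty(M)$-module structure as the two-sided ideal of regularizing operators, and $\smooth(M)$ carries the left module structure via the action of operators on functions, and the displayed identity is precisely module-linearity with respect to these.)

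For the converse, suppose $\phi \in \mathrm{PT}(M)$ is a left $\Psi^\infty(M)$-module map. The idea is to extract a distribution from $\phi$ by testing against a suitable net of smoothing operators that approximates the identity. First I would fix a nonvanishing smooth density on $M$ and use the Schwartz kernel theorem to identify $\smoothing(M) \cong \smooth(M \times M)$; a smoothing operator $T$ with kernel $k(x,y)$ acts by $T(f)(x) = \int k(x,y) f(y)\, dy$. Given the module-map property, I would analyze $\phi(T)$ for rank-one smoothing operators $T = g \otimes h$ (i.e.\ $T(f) = \langle f, h\rangle g$): since $g \otimes h = M_g \circ (1 \otimes h)$ with $M_g$ the multiplication operator (an order-zero pseudodifferential operator) and $1 \otimes h$ itself a smoothing operator, module-linearity gives $\phi(g \otimes h) = g \cdot \phi(1 \otimes h)$. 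More usefully, writing $g \otimes h = (g \otimes h_0) \circ S$ for appropriate factorizations, one sees that $\phi$ is determined by a single linear functional. The cleanest route: pick any fixed smoothing operator $T_0 \neq 0$, say $T_0 = \chi \otimes \chi$ for a bump $\chi$; for a general $T$ one can find $P \in \Psi^\infty(M)$ (in fact of negative order, or using that $\smoothing(M)$ is an ideal, $P \in \smoothing(M)$ itself postcomposed suitably) relating $T$ and $T_0$, and transport the value $\phi(T_0)$. I would then define $u$ by $\langle u, f \rangle := $ (a normalization of) $\phi(f \otimes \chi)$ evaluated appropriately, checking it is well-defined and continuous on $\smooth(M)$ — continuity following from polynomial tameness of $\phi$, which gives a Sobolev estimate $\|\phi(T)\|_n \leq C_n \|T\|_{n+r}^k$ and hence, by Remark \ref{lintameremark}, a \emph{linear} such estimate since $\phi$ is linear. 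Finally I would verify $\phi = \Theta_u$ by confirming the two maps agree on the (dense, or spanning) set of finite-rank smoothing operators and then invoking continuity.

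The main obstacle I anticipate is making the extraction of $u$ canonical and simultaneously verifying it reproduces \emph{all} of $\phi$, not just its values on rank-one operators: one must show that left $\Psi^\infty(M)$-linearity is rigid enough to force $\phi(T) = T(u)$ for every smoothing $T$, and this requires exploiting that a general $T \in \smoothing(M)$ can be written (or approximated in the graded Fréchet topology) as $P \cdot T'$ for $P \in \Psi^\infty(M)$ and $T'$ a fixed reference smoothing operator — equivalently, that $\smoothing(M)$ is generated as a left $\Psi^\infty(M)$-module (topologically) by suitable elements. Here one uses that $(1+\Delta)^{-N}$ is smoothing for large $N$ and that any smoothing operator $T$ equals $[(1+\Delta)^{N} T] \cdot (1+\Delta)^{-N}$ with $(1+\Delta)^N T$ still smoothing, combined with density arguments; the polynomial tameness of $\phi$ is exactly what guarantees the resulting $u$ lands in some Sobolev space $H^k(M) \subset \maD'(M)$ rather than being merely a formal object. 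Matching conventions for which side $\Psi^\infty(M)$ acts on $\smoothing(M)$ — and checking that the module map hypothesis is with respect to the \emph{left} action used throughout Section \ref{PT} — will also need care.
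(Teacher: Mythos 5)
Your forward direction is the same as the paper's. For the converse, your plan is a genuinely different route. The paper picks an approximate unit $(T_\ep)$ of the ideal $\smoothing(M)$, writes $\phi(T)=\lim_\ep\phi(TT_\ep)=\lim_\ep T\,\phi(T_\ep)$ (using left $\Psi^{\infty}$-linearity, with $T\in\smoothing(M)\subset\Psi^{\infty}(M)$ acting on the left and continuity of $\phi$ from tameness), sets $v:=\lim_\ep\phi(T_\ep)$, and establishes existence of this limit in $\maD'(M)$ by pairing against test densities, pulling the adjoint through $\phi$ and appealing to the Montel property; then $\phi=\Theta_v$ follows for all of $\smoothing(M)$ at once, with no density-of-finite-rank argument needed. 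Your version instead reads $u$ off $\phi$ evaluated on rank-one kernels $1\otimes h$, and upgrades to general $T$ by density of finite-rank operators in $\smoothing(M)\cong\smooth(M\times M)$ together with continuity of $\phi$ and $\Theta_u$. That is workable and arguably more hands-on, but needs a bit more bookkeeping than the paper's argument.

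Two things in your sketch need fixing. First, $(1+\Delta)^{-N}$ is a classical pseudodifferential operator of order $-2N$ for finite $N$; it is not a smoothing operator — smoothing means order $-\infty$. Second, the factorization $T=[(1+\Delta)^N T]\cdot(1+\Delta)^{-N}$ you invoke puts the $\Psi^{\infty}$ factor on the \emph{right}, which is useless here: $\phi$ is only assumed to be a \emph{left} $\Psi^{\infty}(M)$-module map, so $\phi(T'\circ P)$ cannot be pulled apart. Only left factorizations $T=P\circ T'$ with $P\in\Psi^{\infty}(M)$ interact with the hypothesis, and that is exactly what your rank-one identity $g\otimes h=M_g\circ(1\otimes h)$ provides. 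To make the rank-one route airtight: use left-linearity with $P=X$ a vector field (so $X(1)=0$, hence $X\circ(1\otimes h)=0$) to conclude $X\,\phi(1\otimes h)=0$ and hence $\phi(1\otimes h)$ is (locally) constant; define $\langle u,h\rangle$ as that constant; get continuity of $u$ in some Sobolev norm from the linear tameness estimate (Remark \ref{lintameremark}); check $\phi(g\otimes h)=g\cdot\phi(1\otimes h)=\langle u,h\rangle g=(g\otimes h)(u)$; and extend to all $T$ by density and continuity.
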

\begin{proof}\label{distributions}
It is clear that for any distribution $u$ the map $\Theta_u$ is a left $\Psi^{\infty}(M)$-module map, so we need only to prove the converse.
Let $\phi:\smoothing(M)\rightarrow \smooth(M)$ be a left $\Psi^{\infty}(M)$-module map.
 First let us assume  that  for a given approximate unit $T_{\ep}$ in the algebra $\smoothing(M)$, such that $\underset{\ep\rightarrow 0}{\textrm{lim}}~T_{\ep}T=\underset{\ep\rightarrow 0}{\textrm{lim}}~TT_{\ep}=T$ for all $T\in \smoothing(M)$, the limit $\underset{\ep\rightarrow 0}{\textrm{lim}}~\phi(T_{\ep})$ exists in distributions and is equal to $v$. Since by assumption 
 $\phi$ is a continuous module morphism, we have
 \begin{align*}
 \phi(T)&=\underset{\ep\rightarrow 0}{\textrm{lim}}~\phi(TT_{\ep})\\
 &=\underset{\ep\rightarrow 0}{\textrm{lim}}~T\phi(T_{\ep})=Tv = \Theta_v(T).
 \end{align*}
 Thus to complete the proof it suffices to show that the limit in the above assumption exists. Again since $\maD'(M)$ is a Montel space, it is enough to check the limit by evaluation on test-functions (smooth densities on $M$). We fix a Riemannian density to trivialize the density bundle and observe that given any function $f\in\smooth(M)$ there is a smoothing operator  $T$ such that $T(1)=f$\footnote{ If $\rho$  is a density that integrates to  $1$ then $T$ can be given by the kernel $ker(T)=f(x)\otimes \rho(y)$.}, and therefore
 \begin{align*}
 \underset{\ep\rightarrow 0}{\textrm{lim}}~\ip{\phi(T_{\ep}),f}&=\underset{\ep\rightarrow 0}{\textrm{lim}}~\ip{\phi(T_{\ep}),T(1)}\\
 &=\ip{\phi(T^*),1}
 \end{align*}
\end{proof}
 This Proposition inspires the following definition.
 \begin{definition}
 Let $X$ be a graded Fr\'echet bimodule over the filtered algebra $A$ and let $Y$ be a graded Fr\'echet module  over $A$. Then a distribution over the singularity structure $(A,X,Y)$ is a  polynomially tame left $A$-module map $\phi:X\rightarrow  Y$. The space of all distributions will be denoted by $D^A(X,Y)$.
 \end{definition}
 We shall continue to denote by $\maD'(M)$ the space of distributions over $(\Psi^{\infty}(M),\smoothing(M),\smooth(M))$.
 
\begin{eg}
The proof of Proposition \ref{distributions} can also be applied to the  singularity structure associated to Heisenberg calculus over a contact (or a Heisenberg) manifold $(M,H)$,  so that
the space of distributions over $(\Psi^{\infty}_H(M),\smoothing(M),\smooth(M))$ is again $\maD'(M)$. However, this prescribes a different notion of regularity and singularity as compared to distributions.
\end{eg} 

\begin{eg}\label{singsup2}
Let $\theta\in (0,1]$ be an irrational number. The rotation $C^*$ algebra $A_{\theta}$ is the universal algebra generated by two unitaries $U_1$ and $U_2$  satisfying the relation
\[U_2U_1=\lambda U_1U_2\quad \lambda=e^{2\pi i\theta}.\]
Recall that the noncommutative torus $\maA_{\theta}$ is the pre-$C^*$ algebra in $A_{\theta}$  of elements of the form 
\[\maA_{\theta}:=\{\sum a_{rs}U_1^rU_2^s~|~~a_{rs}\in\Sch(\ZZ^2)\}.\]
We consider an action of $\maA_{\theta}$ on the space $\maE=\Sch(\RR)$ of Schwartz functions on $\RR$ given by
\begin{align*}
\xi\cdot U_1(s):&=\xi(s+\theta) \\ 
\xi\cdot U_2(s):&=e^{2\pi i s}\xi(s) 
 \end{align*}
($\xi\in \Sch(\RR),\ s\in \RR$). 
 The smooth structure on $\maA_{\theta}$ corresponds to a natural action of the torus $\mathcal{T}^2$ on $A_{\theta}$. This structure is recovered from the  two canonical derivations on $\maA_{\theta}$ given on the generators by 
 \[\delta_j(U_k):=\delta_{jk}2\pi i U_k\quad (j,k=1,2).\]
These derivations lift to connections on the module  $\maE$, given by:
\[(\nabla_1\xi)(s):=-2\pi i \frac{s}{\theta}\xi(s)~~~\textrm{and}~~(\nabla_2\xi)(s):=\frac{d\xi}{ds}(s).\]
In addition, $\mathrm{End}_{\maA_{\theta}}(\maE)$, the space of  $\maA_{\theta}$ linear endomorphisms of $\maE$, can be naturally identified with $\maA_{\theta'}$ where  $\theta'=\frac{1}{\theta}$. More explicitly the generators are given by:
\[V_1\xi(s):=\xi(s+1)\qquad V_2\xi(s):=e^{-\frac{2\pi is}{\theta}}\xi(s).\]

 Thus, from the point of view of  the $\maA_{\theta}$ action, the following operators can be considered as order $\leq n$  differential operators on $\maE$, 
 \[D:=\sum_{\alpha+\beta\leq n}C_{\alpha\beta}\nabla_1^{\alpha}\nabla_2^{\beta}\quad C_{\alpha\beta}\in \maA_{\theta'}=\mathrm{End}_{\maA_\theta}(\maE).\]
 
 The principal symbol of the operator $D$ above is a function on the circle $S^1$  with values in  $\maA_{\theta'}$, given by:
 \[\sigma(D)(t):=\sum_{\alpha+\beta=n}C_{\alpha\beta}(\cos{t})^{\alpha}(\sin{t})^{\beta}.\]
In this setting, an operator is considered elliptic iff $\sigma(D)$ is invertible in $\smooth(S^1,\maA_{\theta'})$. We shall denote by $\maD_{\theta}$ the  above filtered algebra of differential operators on $\maE$.

We can now realize a noncommutative version of singular support of a Schwartz distribution analogous to Example \ref{singsup1}. First observe that the  algebra $\smoothing_{\Sch}(\RR)$ of operators on $\maE=\Sch(\RR)$ with kernels in $\Sch(\RR\times \RR)$ is also a module over the algebra of differential operators $\maD_{\theta}$.
 Further, fixing a Sobolev grading on $\maE$  with the positive operator $H:=-(\nabla_1^2+\nabla_2^2)$  and the corresponding tensor product grading on $\smoothing_{\maE}(\RR)$, both spaces become Fr\'echet modules over $\maD_{\theta}$. We consider the algebra $A =\textrm{End}_{\maA_{\theta}}(\maE) $ with  trivial grading $A_0:=A$ and $A_{-1}:=\{0\}$. It is clear that any $u\in\Sch'(\RR)$ defines an element $\Theta_u\in  D^A(\smoothing_{\maE}(\RR),\maE)$.
 
We now obtain, for instance, the wavefront set of the delta distribution 
 \[\mathrm{WF}^A(\delta_0):=\{a=\sum_nf_nV_1^n~|~~a\in \maA_{\theta'},\ f_n(n)=0.\}\]
 This singular support  is expected to propagate naturally under the Hamiltonian flow corresponding to elements of $\maD_{\theta}$.
 \end{eg}
\begin{eg}
The action of $\mathcal{T}^2$ on $A^{\theta}$ introduced in the previous example gives rise to a natural algebra of pseudodifferential operators on $\maA_{\theta}$ (\cite{NCG}).  For simplicity, we shall consider the lift of this action on $\RR^2$ and denote it by $\alpha$:
\[\alpha_x(U^i_1U^j_2):=e^{2\pi x\cdot (i,j)}U^i_1U^j_2\qquad (x\in \RR^2).\] 

The symbols of order $m$ in this calculus are maps $\rho:\RR^2\rightarrow \maA_{\theta}$ satisfying the following two conditions:
\begin{enumerate}
 \item  For all $i,j\in \NN$ and every multi-index  $\beta$ there exists $C_{ij\beta}>0$ such that
 \[\|\delta_1^i\delta_2^j(\partial^{\beta}\rho(\xi)) \|\leq C_{ij\beta}(1+|\xi|)^{m-|\beta|}\qquad (\xi \in \RR^2).\]
\item There exists  $\sigma\in \smooth(\RR^2\setminus{0})$ such that
\[\lim_{\lambda\rightarrow \infty}\lambda^{-m}\rho(\lambda\xi)=\sigma(\xi).\]
 \end{enumerate}
 By definition, a symbol $\rho $ is of oder  $-\infty$ if for its Fourier transform we have $\hat{\rho}\in \Sch(\RR^2,\maA_{\theta})$.

The operator on $\maA_{\theta}$ corresponding to a symbol $\rho$ is given by
\[P_{\rho}(a):=\int_{\RR^2}\int_{\RR^2}e^{-ix\cdot \xi}\rho(\xi)\alpha_x(a)dxd\xi.\]

 We shall denote the algebra of pseudodifferential operators of all orders by $\Psi^{\infty}(\maA_{\theta})$. 
With the seminorms provided  by $a\rightarrow \sum_{i+j=n}\|\delta^i_1\delta^j_2a\|$, $\maA_{\theta}$ turns into a Fr\'echet module over $\Psi^{\infty}(\maA_{\theta})$. 
As mentioned above, the ideal of smoothing operators can be identified with $\Sch(\RR^2,\maA_{\theta}) \cong \Sch(\RR^2)\hat\otimes \maA_{\theta}$. This space acquires a tensor product grading with the grading on $\Sch(\RR)$ as in Example \ref{singsup2}.
 Thereby, $(\Psi^{\infty}(\maA_{\theta}),\smoothing(\maA_{\theta}),\maA_{\theta})$ is a singularity structure.
 \end{eg}
A strong motivation for considering singularity structures and their distributions along with their regularity and singularity features lies in the natural  category of morphisms associated to them, to which we turn in the following section. 
\section{Some functoriality considerations} 
We wish to develop a category whose objects are singularity structures $(A,X,Y)$  as defined in \ref{abstract_ss} and suitable morphisms between them. To include the situation of the abstract distributions described in the previous section, we shall for the rest of the paper assume that $X$ is a Fr\'echet bimodule over $A$. The first choice of such morphisms is rather straightforward and we describe them below.
\begin{definition}\label{direct_morph}
Given two singularity structures $(A_j,X_j,Y_j)~j=1,2$,
a direct morphism $\alpha:(A_1,X_1,Y_1)\rightarrow (A_2,X_2,Y_2)$  is a triple of maps, consisting of
\begin{enumerate}
\item A morphism of filtered algebras $\alpha^a:A_1\rightarrow A_2$. This provides $X_2$ and $Y_2$  with (pullback) $A_1$ module structures.
\item  A tame linear map $\alpha^r:X_1\rightarrow X_2$ which is an $A_1$ bimodule map.
\item A tame linear map $\alpha^l:Y_2\rightarrow Y_1$ which is also a left-$A_1$ module map.
\end{enumerate}
We call the direct morphism $\alpha$ \textit{monic} if $\alpha^a$ as well as the map induced by it on the associated graded algebras is injective, the map $\alpha^r$ is injective, and $\alpha^l$ is surjective.  In addition, for monic direct morphisms we require that the maps $\alpha^l$ and $\alpha^r$ have bounded order of tameness.
\end{definition}
The above definition guarantees that any direct morphism $\alpha:(A_1,X_1,Y_1)\rightarrow (A_2,X_2,Y_2)$ induces a morphisms $\alpha^*: PT(X_2,Y_2)\rightarrow PT(X_1,Y_1)$ simply by composition
\begin{center}
$\xymatrix{
X_2\ar[r]^{\phi}&Y_2\ar[d]^{\alpha^r}\\
X_1\ar[u]^{\alpha^l}\ar@{-->}[r]^{\alpha^*\phi}&Y_1
}$
\end{center}
This has the further property that it preserves the $\Op{Reg}$-maps as well as distributions,
$\alpha^*:D^{A_2}(X_2,Y_2)$ $\rightarrow$ $D^{A_1}(X_1,Y_1)$. Then the traditional functoriality of  wavefront sets follows from the condition that $\alpha^l$ and $\alpha^r$ have bounded order of tameness, and in this case
\[\textrm{WF}^{A_1}(\alpha^*u) \subseteq \alpha_0^*(\textrm{WF}^{A_2}(u))\qquad u\in D^{A_2}(X_2,Y_2),\]
where $\alpha_0:\textrm{Gr}(A_1)_0\rightarrow \textrm{Gr}(A_2)_0$ is the map on the $0$-component of the associated graded algebras.

Let us consider a rather simple example of functoriality 
properties of singularity structures  relevant in classical analysis.
 Let $(A,X,Y)$  be a singularity structure and  $\rho:A'\rightarrow A$  a morphism of filtered algebras. By pulling back modules under $\rho$ we obtain a new singularity structure $(A',\rho^*X,\rho^*Y)$, where we shall suppress $\rho$ from the notation and write $(A,X,Y)$  instead for simplicity.
 Now $\rho$ induces a map $\tilde{\rho}:B':=A'_0/A'_{-1}\rightarrow B=A_0/A_{-1}$. For any $\phi\in \Op{PT}(X,Y)$ we note immediately that
 \[\tilde{\rho}:\WF^{A'}(\rho^*\phi)\rightarrow \WF^A(\phi).\]
 
 The inclusion $\smooth(M)\hookrightarrow  \Psi^{\infty}(M)$ as multiplication operators for instance provides the well known fact that the wavefront set of a distribution projects to its singular support. 
 
 \subsection{Correspondence morphism}
 Although the direct morphism defined in the previous section gives a nice functorial way of assigning wavefronts and regularity to abstract distributions, it turns out to be a rather restrictive concept as not all useful morphisms satisfy all the constraints of Definition \ref{direct_morph}. We shall here briefly describe a more general notion of correspondence and provide some simple examples. This should be thought of as a sort of ``Morita equivalence'' in the category $\mathfrak{Sing}$.

\begin{definition}
A correspondence between two singularity structures $(A_j,X_j,Y_j)~j=1,2$ is a span that is a third singularity structure $(A,X,Y)$ with two direct morphisms $\alpha_j:(A,X,Y)\rightarrow (A_j,X_j,Y_j)$
\begin{center}
$\xymatrix{
&(A,X,Y)\ar[dl]^{\alpha_1}\ar[dr]_{\alpha_2}&\\
(A_1,X_1,Y_1)&&(A_2,X_2,Y_2)
}$
\end{center}
We call a  correspondence \textit{true correspondence} provided the map $\alpha_1$ is monic in the sense of Definition \ref{direct_morph}
\end{definition}
 We shall abbreviate the data $(A,X,Y),\alpha_1,\alpha_2$ by $A$.
  
  Correspondences can be composed since the category $\mathfrak{Sing}_d$ with direct morphisms  has all pullbacks.  In particular, composition of two monic/true correspondences is again a true correspondence.  We shall refer to the category\footnote{Strictly speaking this is only a bicategory as compositions are only defined up to isomorphisms, we shall however not need this subtle distinction.} of singularity structures with true spans as morphisms as  $\mathfrak{Sing}$.
 
 \begin{definition} Let $A$  be a correspondence between two singularity structures $(A_i,X_i,Y_i) ~i=1,2$.
 An abstract distribution $u$ in $D^{A_2}(X_2,Y_2)$ is called transverse to $A$ provided there exists a  unique distribution $v\in D^{A_1}(X_1,Y_1)$ such that  $\alpha_1^*v=\alpha_2^*u$. In this case we set $A^*u:=v$.  In case the map $\alpha^a_2:\textrm{Gr}(A)_0\rightarrow \textrm{Gr}(A_2)_0$ is surjective then
 \[(\alpha^a_2)_*(\alpha^a_1)^*\textrm{WF}^{A_1}(v)\subseteq \textrm{WF}^{A_2}(u)\]
 \end{definition}
 The pull-back of distributions provides an example of a correspondence as follows:
 \begin{eg}
 Given a smooth surjective submersion $f:M\rightarrow N$ between two compact manifolds, let 
\[ \Psi^{\infty}_f(M):=\{P\in\Psi^{\infty}(M)~|P:f^*\smooth(N)\rightarrow f^*\smooth(N)\}.\]
Similarly we define $\smoothing_f(M):=\smoothing(M)\cap \Psi^{\infty}_f(M)$. We claim that the singularity structure $(\Psi^{\infty}_f(M),\smoothing_f(M),\smooth(M))$  with the morphisms given below describes a correspondence that represents the pull-back of distributions.

The obvious morphism $\alpha_1:(\Psi^{\infty}_f(M),\smoothing_f(M),\smooth(M))\rightarrow (\Psi^{\infty}(M),\smoothing(M),\smooth(M))$ is monic.  For $\alpha_2:(\Psi^{\infty}_f(M),\smoothing_f(M),\smooth(M))\rightarrow (\Psi^{\infty}(N),\smoothing(N),\smooth(N))$ we let  $\alpha_2^l=f^*$. Since $f^*$ is assumed to be injective, for  an operator $P\in\Psi^{\infty}_f(M)$ we can define    $\alpha_2^a(P)(\psi):={f^*}^{-1}P(f^*(\psi))$. 

The following lemma implies that this correspondence morphism indeed represents the pull-back of a distribution in the usual sense.
\begin{lem} Let $f:M\rightarrow N$ be a smooth surjective submersion. Let $u\in \maD'(N)$ be a distribution.  With the notations 
introduced above we have:
\[\alpha_1^* \Theta_{f^*u}=\alpha_2^*\Theta_u.\]
\end{lem}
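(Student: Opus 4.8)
The plan is to unwind both sides of the claimed identity as polynomially tame maps $\smoothing_f(M)\to\smooth(M)$ (which is where both $\alpha_1^*\Theta_{f^*u}$ and $\alpha_2^*\Theta_u$ live, since $\alpha_1^l$ and $\alpha_2^l$ have codomain $\smooth(M)$), and then check equality pointwise on a test operator $T\in\smoothing_f(M)$. First I would recall from the diagram defining $\alpha^*$ that for a direct morphism $\alpha$ with data $(\alpha^a,\alpha^r,\alpha^l)$, one has $\alpha^*\phi=\alpha^r\circ\phi\circ\alpha^l$. For $\alpha_1$ the algebra map is the inclusion and $\alpha_1^r,\alpha_1^l$ are also the relevant inclusions/identities on $\smoothing(M)$ and $\smooth(M)$, so $\alpha_1^*\Theta_{f^*u}$ is simply the restriction of $\Theta_{f^*u}:T\mapsto T(f^*u)$ to $T\in\smoothing_f(M)$. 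For $\alpha_2$, we have $\alpha_2^l=f^*:\smooth(N)\to\smooth(M)$, so $\alpha_2^*\Theta_u = \alpha_2^r\circ\Theta_u\circ f^*$; here $\alpha_2^r:\smoothing_f(M)\to\smoothing(N)$ is the map induced on smoothing operators by $\alpha_2^a(P)(\psi)={f^*}^{-1}P(f^*\psi)$, i.e.\ $\alpha_2^r(T)={f^*}^{-1}\circ T\circ f^*$ viewed as an operator on $\smooth(N)$ (this is the content that needs checking, namely that $T\in\smoothing_f(M)$ really does descend to a smoothing operator on $N$).

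The computation then goes as follows. Evaluate the right-hand side on $T\in\smoothing_f(M)$:
\[
(\alpha_2^*\Theta_u)(T)=\alpha_2^r\bigl(\Theta_u(\alpha_2^l(T))\bigr),
\]
but $\alpha_2^l$ acts on $Y_2=\smooth(N)$; one must be careful that the evaluation of $\Theta_u$ is against $T$ acting as an operator, not against a function. The cleanest route is to use the characterization of distributions as left-module maps (Proposition in the excerpt, labeled \texttt{distributions}) together with the defining relation $\alpha_1^l$, $\alpha_2^l$, $\alpha_1^r$, $\alpha_2^r$ satisfy: it suffices to show both sides agree after pairing with an arbitrary test density on $M$. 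Using the footnoted fact that any $g\in\smooth(M)$ arises as $T_g(1)$ for a smoothing operator $T_g$ with kernel $g(x)\otimes\rho(y)$, and noting that when $g$ is pulled back along $f$ the corresponding $T_g$ lies in $\smoothing_f(M)$, one reduces to the identity
\[
\langle T(f^*u),\,\text{density}\rangle = \langle \alpha_2^r(T)(u),\,\text{pushed-forward density}\rangle,
\]
which is exactly the statement that $f^*u$, evaluated against $T$, equals $u$ evaluated against the descended operator $\alpha_2^r(T)$ — i.e.\ the defining property of the pullback $f^*u$ of a distribution under a submersion, combined with the adjunction $\langle f^*u,\varphi\rangle=\langle u, f_*\varphi\rangle$ on densities.

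The main obstacle, and the step I would spend the most care on, is the bookkeeping around $\alpha_2^r$: one must verify that an operator $P\in\Psi^\infty_f(M)$ (hence $T\in\smoothing_f(M)$), which by definition maps $f^*\smooth(N)$ into $f^*\smooth(N)$, actually induces a well-defined operator ${f^*}^{-1}\circ T\circ f^*$ on $\smooth(N)$ that is smoothing, and that this is compatible with the module structures so that the square defining $\alpha_2^*$ commutes. Once $\alpha_2^r$ is pinned down, the identity $\alpha_1^*\Theta_{f^*u}=\alpha_2^*\Theta_u$ becomes the assertion that for every $T\in\smoothing_f(M)$,
\[
T(f^*u) = f^*\bigl(\alpha_2^r(T)(u)\bigr)\quad\text{in }\smooth(M),
\]
and this follows by testing against densities and invoking the standard definition of the pullback of a distribution by a submersion together with $\langle f^*u,\psi\rangle=\langle u,f_*\psi\rangle$. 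I would also remark that transversality (unique existence of the descended distribution) is automatic here because $f^*$ is injective, so $\alpha_2^*\Theta_u$ determines $\Theta_u$ and hence $u$.
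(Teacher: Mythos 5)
The paper actually provides no proof for this lemma; it is asserted without argument inside the example, so your proposal is filling a genuine gap rather than duplicating the authors' reasoning. Your overall strategy is sound and, I believe, the intended one: unwind $\alpha_1^*$ and $\alpha_2^*$ on a test operator $T\in\smoothing_f(M)$, identify $\alpha_2^r(T)={f^*}^{-1}\circ T\circ f^*$ as the induced smoothing operator on $N$, and observe that once this is pinned down the identity $T(f^*u)=f^*\bigl(\alpha_2^r(T)(u)\bigr)$ is essentially cancellation. You also correctly flag the two real technical points: (i) that $\alpha_2^r(T)$ is genuinely a smoothing operator on $N$, and (ii) that $T$ must be shown to map not just $f^*\smooth(N)$ but also $f^*\maD'(N)$ into $f^*\smooth(N)$ — a point that needs a short continuity-and-closedness argument (smoothing operators are continuous $\maD'(M)\to\smooth(M)$, $f^*\maD'(N)$ is the closure of $f^*\smooth(N)$ in $\maD'(M)$, and $f^*\smooth(N)$ is closed in $\smooth(M)$), which you gesture at via testing against densities.

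One slip worth fixing: in your opening paragraph you write $\alpha^*\phi=\alpha^r\circ\phi\circ\alpha^l$, but from Definition \ref{direct_morph} one has $\alpha^r\colon X_1\to X_2$ and $\alpha^l\colon Y_2\to Y_1$, so the only composite that type-checks is $\alpha^*\phi=\alpha^l\circ\phi\circ\alpha^r$. You apparently inherited the swap from the paper's xy-diagram, whose arrows are mislabeled relative to the definition, and you do in fact self-correct later — your final identity $T(f^*u)=f^*\bigl(\alpha_2^r(T)(u)\bigr)$ is the right one, consistent with $\alpha_2^*\Theta_u=f^*\circ\Theta_u\circ\alpha_2^r$. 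But the intermediate display $(\alpha_2^*\Theta_u)(T)=\alpha_2^r\bigl(\Theta_u(\alpha_2^l(T))\bigr)$ in your ``computation'' paragraph does not type-check and should simply be deleted; go directly to $(\alpha_2^*\Theta_u)(T)=f^*\bigl(\Theta_u(\alpha_2^r(T))\bigr)=f^*\bigl(\alpha_2^r(T)(u)\bigr)$. Your remark that transversality is automatic because $f^*$ is injective is correct and a nice observation.
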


 \end{eg}

 \section{Pseudo-differential operators on groupoids}\label{groupoid_calc}
In this section we shall construct a morphism of  singularity structures related to a Lie groupoid $\maG$ over the base manifold $M$ 
under the following two assumptions:
\begin{enumerate}
\item The  space of arrows $\maG^{(1)}$ is a compact manifold. This condition can be weakened but we shall keep this assumption for simplicity of exposition.
\item The anchor map on the Lie algebroid $\maA(\maG)$ (described below) is  surjective.
\end{enumerate}

 This morphism $\beta_{\maG}$ presented below is used to connect the propagation of singularities on $\maG$ to  the propagation on the base manifold $M$. We begin by recalling a few facts about the groupoid calculus.
 
Let $\maG$ be a differential groupoid over the space of units $M=\maG^{0}$.  We denote by $A(\maG)$ its Lie-algebroid and consider sections of $A(\maG)$ as right-translation invariant vector fields  on $\maG$ which are vertical with respect to the domain map $d$. Fix  a section of the vertical density bundle $\maD=|\Lambda|A(\maG)$  which provides a measure $\mu_x$ on each fiber $\maG_x:=d^{-1}x\quad (x\in M)$. As customary we denote by $r$ the range map and by $\maG^x:=r^{-1}x$ the fibers of $r$.

Given a vector bundle $E$ over  the space of units $M$, a pseudo-differential operator on $\maG$  of integer order $m$ is a differentiable\footnote{defined, for example, using smoothly varying symbols in local coordinates}  family of classical pseudo-differential operators $P:=(P_x)_{x\in M}$, where each $P_x$ is an element of $\Psi^m(\maG_x:r^*E)$ which is:
\begin{enumerate}
\item Uniformly supported: Let $k_x$ denote the distributional kernel of $P_x$ and define the support of $P$ as \[\textrm{supp}~P:=\overline{\bigcup_x \textrm{supp}~k_x},\]
Then $P$ is said to be uniformly supported if the set $\{gh^{-1}|(g,h)\in \textrm{supp}~P\}$ is compact in $\maG$.
\item Right invariant:
For any element $g\in\maG$ define a map $U_g:\smooth_c(\maG_{d(g)}:r^*E)\rightarrow \smooth_c(\maG_{r(g)}:r^*E)$ by
\[U_g(\phi)(g'):=\phi(g'g).\]
A family $P$ is right-invariant iff $U_gP_x=P_yU_g$  for all $g$  where $x=d(g)$ and $y=r(g)$.
\end{enumerate} 
 The space of all pseudo-differential operators of order $m$ is denoted by $\Psi^m(\maG:E)$.  The space of regularizing elements is denoted by $\Psi^{-\infty}(\maG:E):=\bigcap_m\Psi^m(\maG:E)$. It can  be identified with the convolution algebra $\smooth_c(\maG:\textrm{End}(E)\otimes d^*\maD)$. In case $E$ is the trivial line bundle we shall write $\Psi^{\infty}(\maG)$ for the corresponding algebra of pseudo-differential operators. The associated graded algebra can be identified with the space of homogeneous sections of $\smooth(A^*(\maG):\textrm{End}(E))$. An operator $P$ is then called elliptic if  $\sigma(P)$ is invertible away from the zero-section.

As mentioned before we will from now on assume that $\maG$ is compact.  To describe the singularity structure associated to $\maG$ we shall recall a few preliminaries. 
 Fix a choice of metric (nondegenerate, symmetric bilinear form) on $A(\maG)$  to determine a trivialization of $\maD$, and in addition  fix a Hermitian inner product on $E$. This gives rise a pre-Hilbert-* module over $\smooth(M)$ with field of Hilbert spaces given by $L^2(\maG_x,r^*E)$ for $x\in M$ and sections given by restriction of  continuous sections $\Gamma(\maG:r^*E)$ to each fiber $\maG_x$. We shall denote  the corresponding $C(M)$ $C^*$-modules by $\maE(M:E)$ and by $\maE$  when the vector bundle $E$ is trivial.
 
 The trivialization of $\maD$ provides an involution on $\Psi^{\infty}(\maG)$ . It follows that the
 elements of $\pdo{\maG}$ are $\smooth(M)$-linear (not necessarily bounded) regular operators on $\maE$.   Further, we fix a  form positive self-adjoint operator $D$ in  $\pdo{\maG}$.  Then $D$ is a regular operator on the $C^*$-module $\maE$  by the Woronowicz criterion (see, e.g., \cite{VS})  and hence defines a chain of Sobolev-modules $\maE^n$. 
 We set $\maE^{\infty}:=\bigcap_n \maE^n$ and use the norm of $\maE^n$ to provide the structure of a graded Fr\'echet space. It follows from the independence of the Sobolev chains  $\maE^n$ from the operator $D$ (see \cite{VS}) that $\maE^{\infty}$ is a Fr\'echet-module over $\pdo{\maG}$. 
 
 As $\maG$ is assumed to be compact,  the algebra $\smoothing(\maG)$ is a Fr\'echet module  over $\pdo{\maG}$ .  The appropriate grading on $\smoothing(\maG)$ will be described below.
  In this case one  obtains a $\Psi(C^*(\maG),C(M))$ unbounded module $(\maE, D)$.  The extension to equivariant vector bundles discussed below is analogous.  We are now ready to define $\textrm{SS}(\maG)$ as mentioned above.

 \begin{definition}\label{groupoid_SS}
 We denote by $\textrm{SS}(\maG)$  the singularity structure defined by $(\pdo{\maG},\smoothing(\maG),\maE^{\infty})$.
 \end{definition}

\subsection{Vector representation on the units and the associated singularity structure} 

The algebra $\pdo{\maG}$ consists of longitudinal operators on fibers $\maG_x$ of the domain map $d$. The vector representation assigns to each such operator a transversal operator, that is an operator on the base manifold $M$. This assignment gives a direct morphism from $\textrm{SS}(\maG)$ to the singularity structure on $M$. First we recall some basics about the vector representation (see \cite{NWX} for more details).

 Recall that the  trivialization of the density bundle $\maD$  chosen above also provides  a norm on $\Psi^{-\infty}(\maG)$ given by 
  \[\|P\|_1:=\underset{x\in M}{\sup}\left\{\int_{G_x}k_Pd\mu_x,\int_{G^x}k_pd\mu_x\right\}\]
  Any representation of $\Psi^{-\infty}(\maG)$ that  satisfies $\|\pi(P)\|\leq \|P\|_1$ extends uniquely to a bounded operator on $\Psi^{\infty}(\maG)$, where by bounded one means that $\Psi^0(\maG)$ acts by bounded operators. We shall only consider representations $\pi$ of $\smoothing(\maG)$ that are non-degenerate, that is $\pi(\smoothing(\maG))H=\maH^{\infty}$ is dense in $H$ and the above condition on the norms holds. For such representations any self-adjoint elliptic operator $Q$ of order $1$ gives rise to an essentially self-adjoint operator $\pi(Q)$. In the standard way one can define Sobolev space completions of $\maH^{\infty}$ using powers of $Q$ and such Sobolev spaces do not depend on the choice of the elliptic operator $Q$. Thus consider $\maH^{\infty}$ with  the graded Fr\'echet space structure provided by these Sobolev norms.
  
 There is an obvious action  of $\Psi^{\infty}(\maG:E)$ on  $\smooth(M:E)$, called the vector representation, obtained by setting $P(f)\circ r=P(f\circ r)$.  We shall denote this action by $\pi_0$. When the space of units is a compact manifold (possibly with corners) then $(\Psi^{\infty}(\maG:E),\Psi^{-\infty}(G:E),\smooth(M))$ provides an example of a singularity structure.  In case $E$ has the structure of an equivariant vector bundle, this gives rise to a representation of $\pdo{\maG}$  as described below  \cite{NWX,LN}.
  
 Let us consider a special case of vector bundle. An equivariant vector bundle $(V,\rho)$ on a groupoid $\maG$ is a vector bundle $V$ over the units $M$  provided with a bundle isomorphism $\rho: d^*V\rightarrow r^*V$ such that $\rho(gh)=\rho(g)\rho(h)$.
 Given an equivariant vector bundle $(V,\rho)$ there is an action of $\Psi^{\infty}(\maG)$ on $\smooth(M:V)$ defined  
  by identifying $\smooth_c(\maG)\otimes V_x\simeq \smooth(\maG:d^*V)$ and hence defining a map $T_{\rho}:\Psi^{\infty}(\maG)\rightarrow \Psi^{\infty}(\maG,V)$, given by
  \[T_{\rho}(P):=\rho(P\otimes 1)\rho^{-1}\]
  and thus defining an action on $\smooth(M:V)$ by $\pi_{\rho}:=\pi_0\circ T_{\rho}$. In fact, analogous to Definition \ref{groupoid_SS} we can define a singularity structure $\textrm{SS}(\maG,V):=(\pdo{\maG},\smoothing(\maG),\maE^{\infty}(\maG,V))$. All the results in this section generalize to the case of equivariant vector bundles, we shall however only formulate the case of a trivial vector bundle $V$ and refer to the map $\pi_0$  as $\pi$.

  As mentioned above we assume that the anchor map $q:A(\maG)\rightarrow TM$ is surjective. This provides a natural measure on $M$ such that the vector representation becomes a bounded $*$ representation. In addition, it provides a direct morphism from $\textrm{SS}(\maG)$ to the natural singularity structure on $M$.
  \begin{lem}\label{vect_rep}
  Let $\maG$ be a compact Lie groupoid such that the anchor map $q:A(\maG)\rightarrow TM$ is surjective. Then the following hold:
  \begin{enumerate}
\item  There is a measure on $M$ such that the vector representation gives rise to a bounded $*$ representation  of $\smoothing(\maG)$ on  $L^2(M,V)$. 
 \item There is a form-positive  elliptic self-adjoint operator in\ $\Psi^1(\maG)$  whose image  $\pi_{\rho}$ is an ordinary elliptic essentially self--adjoint operator of order $1$. 
 \end{enumerate}
  \end{lem}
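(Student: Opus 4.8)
The plan is to exploit the surjectivity of the anchor map $q\colon A(\maG)\to TM$ to build both the measure on $M$ and the elliptic operator more or less simultaneously, by pulling back geometric data along $q$ and the range map $r$. First I would fix a splitting of the exact sequence $0\to\ker q\to A(\maG)\xrightarrow{q} TM\to 0$, which exists since $q$ is a surjective bundle map over the compact manifold $M$. Together with the metric on $A(\maG)$ chosen in the construction of $\textrm{SS}(\maG)$, this induces a Riemannian metric on $M$ (identify $TM$ with the orthogonal complement of $\ker q$ inside $A(\maG)$) and hence a canonical volume density $\mu_M$ on $M$. The key point for part (1) is that, with respect to $\mu_M$, the vector representation $\pi=\pi_0$ is compatible with the chosen involution on $\Psi^{\infty}(\maG)$: for $P\in\smoothing(\maG)$ one has $\langle\pi(P)f,g\rangle_{L^2(M)}=\langle f,\pi(P^*)g\rangle_{L^2(M)}$, so $\pi$ is a $*$-representation. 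Boundedness on $\smoothing(\maG)=\smooth_c(\maG:d^*\maD)$ then follows from the estimate $\|\pi(P)\|\le\|P\|_1$ discussed just before the lemma: one checks that the formula $P(f)\circ r=P(f\circ r)$ is an averaging of $f\circ r$ along the $d$-fibers against the kernel $k_P$, so Schur's test with the two integrals defining $\|P\|_1$ gives the bound. Since $\smoothing(\maG)$ is then represented by bounded operators and the representation is non-degenerate (the image is dense because $q$ is onto), $\pi$ extends to a bounded $*$-representation of $\Psi^0(\maG)$ on $L^2(M,V)$ in the sense described above.

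For part (2), I would start from the longitudinal Laplace-type operator on $\maG$ associated to the chosen metric on $A(\maG)$, i.e. the form-positive self-adjoint $\Delta_{\maG}\in\Psi^2(\maG)$ given fiberwise by the Laplacian on $(\maG_x,r^*E)$, and take $D:=(1+\Delta_{\maG})^{1/2}\in\Psi^1(\maG)$; this is elliptic, form-positive and self-adjoint with respect to the involution. The claim is that its vector representation $\pi_0(D)$ (or $\pi_\rho(D)$ in the equivariant case) is, up to lower-order terms, $(1+\Delta_M)^{1/2}$ for the Riemannian Laplacian $\Delta_M$ of $\mu_M$ on $M$. To see the principal symbol, one computes: under the vector representation a longitudinal vector field $\xi\in\Gamma(A(\maG))$, acting on $f\circ r$, produces $(q(\xi)f)\circ r$ by definition of the anchor, so the symbol of $\pi_0(P)$ at $(x,\eta)\in T^*M$ is $\sigma(P)(x,q^*\eta)$, where $q^*\colon T^*M\to A^*(\maG)$ is the fiberwise transpose of $q$. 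Surjectivity of $q$ makes $q^*$ injective, hence $\sigma(\Delta_{\maG})(x,q^*\eta)=|q^*\eta|^2$ is a positive-definite quadratic form on $T^*_xM$ — this is exactly the dual metric of the Riemannian metric constructed in part (1). Therefore $\pi_0(D)$ is a classical elliptic pseudodifferential operator of order $1$ on $M$ with positive principal symbol; form-positivity and essential self-adjointness follow from those properties of $D$ together with the fact that $\pi_0$ is a $*$-homomorphism on the $*$-algebra $\Psi^{\infty}(\maG)$ and that an elliptic symmetric first-order operator on a compact manifold is essentially self-adjoint.

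The main obstacle I expect is part (2), and specifically making precise the claim that $\pi_0$ intertwines the longitudinal pseudodifferential calculus on $\maG$ with the classical calculus on $M$ on the level of symbols — i.e. justifying the identity $\sigma(\pi_0(P))(x,\eta)=\sigma(P)(x,q^*\eta)$ and, more importantly, that $\pi_0(P)$ lands in the classical calculus $\Psi^{\infty}(M)$ at all (not merely in some larger operator class). For differential operators this is immediate from the anchor relation, and for general pseudodifferential $P$ one reduces to this case by the oscillatory-integral/phase-function description of $P$ together with the uniform support condition, which localizes everything to a neighbourhood of the units where $q$ identifies the relevant directions; the surjectivity of $q$ is precisely what guarantees ellipticity survives the representation. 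A secondary technical point is checking that the involution on $\Psi^{\infty}(\maG)$ coming from the trivialization of $\maD$ matches the formal adjoint with respect to $\mu_M$ under $\pi_0$; this is where the compatibility of the splitting of the anchor sequence with the metric is used, and it is essentially a change-of-variables computation along the fibers of $d$ and $r$.
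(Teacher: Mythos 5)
Your overall strategy matches the paper's: identify $\ker(q)^\perp$ with $TM$ via the fixed metric on $A(\maG)$ to obtain a Riemannian metric on $M$, and use the fiberwise Laplacian $\Delta_x$ coming from the right-invariant family of metrics on $\maG_x$ to build $D$ in part (2). Your symbol computation $\sigma(\pi_0(P))(x,\eta)=\sigma(P)(x,q^*\eta)$, with injectivity of $q^*$ giving ellipticity, is a cleaner and more explicit justification of ellipticity than the paper's brief assertion that all differential operators on $M$ lie in the range of $\pi$, and is a genuine improvement there.

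However, there is a concrete gap in part (1): the Riemannian volume density $\mu_M$ is \emph{not} the measure with respect to which the vector representation is a $*$-representation, and you assert rather than check this compatibility. The involution on $\Psi^\infty(\maG)$ is the one coming from the Haar system $(\mu_x)$ on the $d$-fibers $\maG_x$, so the relevant measure on $M$ must be compatible with the pushforward of $\mu_x$ along $r\colon\maG_x\to M$. Since $q$ is surjective, $r|_{\maG_x}$ is a submersion, and integration along its fibers gives
\[
\int_{\maG_x} r^*\phi\, d\mu_x \;=\; \int_M \phi(y)\,\operatorname{vol}(\maG_x^y)\,d\mu_M(y),
\qquad \maG_x^y := d^{-1}(x)\cap r^{-1}(y),
\]
so the correct measure (per connected component of $M$) is the weighted density $\operatorname{vol}(\maG_x^y)\,d\mu_M$, not $\mu_M$ itself. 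Unless $\operatorname{vol}(\maG_x^y)$ happens to be constant — which need not hold — the unweighted $\mu_M$ does not make $\pi$ a $*$-representation, and the ``change-of-variables computation'' you defer to at the end is exactly the step where this weight appears. You should replace $\mu_M$ with this weighted measure throughout part (1) and carry it into the $L^2(M,V)$ inner product used in part (2) when checking that $\pi(D)$ is formally symmetric.
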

 \begin{proof}
 Since we have fixed a metric on $A(\maG)$ it identifies the complement $ker(q)^{\perp}$ with $TM$  and hence provides a Riemannian metric on $M$. We shall denote the measure obtained from this metric by $\mu_M$. In addition, the surjectivity of the anchor map implies that for any $x\in M$ the range map $r:\maG_x\rightarrow M$ is a submersion. Therefore the Riemannian metric on $\maG_x$ induced from $A(\maG)$  splits into a tensor product of a density on fibers of $r$, that is along the submanifolds $\maG_x^y:=d^{-1}x\cap r^{-1}y$, and a normal density $\mu_M$. For a smooth function $\phi\in\smooth(M)$ integration along the fiber gives
 \begin{align*}
 \int_{\maG_x}r^*\phi\, d\mu_x&=\int_M\left(\int_{\maG^y_x}\phi(y)d\mu_x^y\right)d\mu_M(y)\\
 &=\int_M \phi(y)\textrm{vol}(\maG^y_x)\,d\mu_M(y).
 \end{align*}
 Therefore (on the connected component of $x\in M$) we choose the measure $\textrm{vol}(\maG_x^y)d\mu_M$ and it is clear that on the corresponding space $L^2(M,V)$ the representation of $\smoothing(\maG)$ is a bounded $*$ representation.
 
 To show that there is an elliptic operator in the image of the vector representation of $\pdo{\maG}$  one observes that  a $d$-vertical right-invariant vector field $X$ on $\maG$  can be considered as an element in $\Psi^1(\maG)$. If $s$ is the section of $A(\maG)$ associated to the right-invariant vector field $X$ then the vector representation  $\pi(X)$ equals the action of the vector field $q(s)$. Since the anchor map $q$ is assumed to be surjective, all vector fields and hence all differential operators on $M$ lie in the range of the vector representation $\pi$.  In fact these can be obtained from geometrical differential operators  on $\maG$.  (This assertion is equivalent to the fact that the geometrical differential operators on $\maG$, that is the space of  families of right-invariant $d$-vertical differential operators  on $\maG_x$, can be identified with the symmetric algebra of $\Gamma(M:A(\maG))$.)
In particular, the bilinear form on $A(\maG)$ gives a right invariant family of metrics on $\maG_x$ as $x$ varies in $M$ and the corresponding Laplace operators $\Delta_x$ form a family of elliptic self-adjoint operators on the module $\maE$ and $\pi(\Delta_x)$ is an elliptic differential operator. Thus we may choose the form-positive operator $D=\sqrt{\Delta_x+c}$.  It follows  that $\pi(D)$ is an elliptic pseudo-differential operator on $M$ as desired.
\end{proof}
 
 By applying results from \cite{LN} one can now  construct the desired direct morphism $\textrm{SS}(\maG)\rightarrow \textrm{SS}(M)$.
\begin{theo}
Under the same assumptions as in Lemma \ref{vect_rep}, the vector representation provides a direct morphism
$\beta_{\maG}:\textrm{SS}(\maG)\rightarrow \textrm{SS}(M)$ by
\[\pi:\pdo{\maG}\rightarrow \pdo{M},\quad r^*:\smooth(M)\rightarrow \maE^{\infty}. \]
\end{theo}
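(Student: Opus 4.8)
The plan is to verify that the triple of maps $(\pi, r^*, \alpha^l)$ satisfies the three conditions of Definition \ref{direct_morph}, with $\alpha^a = \pi$ the vector representation, $\alpha^r = r^*$ on smoothing operators, and $\alpha^l$ the appropriate left-module map. Most of the structural work has already been done in Lemma \ref{vect_rep}: the surjectivity of the anchor map gives us both the measure $\mu_M$ on $M$ making the vector representation a bounded $*$-representation, and a form-positive elliptic self-adjoint $D \in \Psi^1(\maG)$ whose image $\pi(D)$ is an ordinary elliptic essentially self-adjoint first-order operator on $M$. This is exactly what is needed to compare the Sobolev gradings on $\maE^\infty$ and $\smooth(M)$: the Sobolev chain on $\maH^\infty = \pi(\smoothing(\maG))H$ built from powers of $\pi(D)$ agrees (up to equivalent gradings, hence giving the same $\Op{PT}$ and $\Op{Reg}$ classes) with the Sobolev grading on $\smooth(M)$ coming from $(1+\Delta)^{1/2}$, since both are built from elliptic self-adjoint operators of order $1$ and such Sobolev scales are independent of the choice of operator. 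So $r^*:\smooth(M) \to \maE^\infty$ lands in $\maE^\infty$ and is linearly tame of bounded tameness degree.

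First I would check condition (1): that $\pi:\pdo{\maG}\to \pdo{M}$ is a morphism of filtered algebras. By construction $\pi$ sends right-invariant $d$-vertical differential operators of order $k$ to differential operators of order $k$ on $M$ (the anchor map carries the order-$1$ generators, i.e. sections of $A(\maG)$, to vector fields on $M$), and more generally $\pi(\Psi^m(\maG)) \subseteq \Psi^m(M)$; this is the content of the results of \cite{LN} that we are invoking. The symbol-level map $\pi_0$ on $\Op{Gr}(\pdo{\maG})_0$ is induced by the anchor $q:A(\maG)\to TM$ dualized, sending homogeneous sections of $\smooth(A^*(\maG))$ to homogeneous functions on $T^*M$ via $q^*$. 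Then I would check condition (2): $r^*$ carries $\smoothing(\maG) = \smooth_c(\maG : d^*\maD)$ into $\smoothing(M)$ — here compactness of $\maG$ and the explicit fiber-integration formula from the proof of Lemma \ref{vect_rep} show that a smooth convolution kernel on $\maG$ pushes to a smooth kernel on $M\times M$ — and that $r^*$ intertwines the $\pdo{\maG}$-bimodule structure on $\smoothing(\maG)$ (pulled back along $\pi$) with the $\pdo{M}$-bimodule structure on $\smoothing(M)$; this intertwining is precisely the relation $P(f)\circ r = P(f\circ r)$ defining $\pi_0$, applied on both sides. Tameness of $r^*$ with bounded order follows from the grading comparison above together with Example \ref{pseudo_closed}'s description of the grading on $\smoothing$. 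Finally, condition (3): the left-module map $\alpha^l:\smooth(M) \to \maE^\infty$ — which in the notation of the statement is again $r^*$ — is $\pdo{\maG}$-linear by the same intertwining identity, and tame by the grading comparison; one should note it need not be surjective, but Definition \ref{direct_morph} only requires surjectivity for \emph{monic} morphisms, and here we only claim a direct morphism.

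The main obstacle I anticipate is the grading comparison in step involving $r^*:\smooth(M)\to\maE^\infty$ and the boundedness of the tameness degree: one must show that the $\maE^\infty$-norms, which are defined via the $C^*$-module Sobolev chain of the chosen $D \in \pdo{\maG}$, pull back under $r^*$ to something dominated by finitely-shifted Sobolev norms on $\smooth(M)$. The key point is that $\pi(D)$ is elliptic self-adjoint of order $1$ on $M$ by Lemma \ref{vect_rep}(2), so $\|r^*f\|_{\maE^n}$ is comparable to $\|\pi(D)^n (r^*f)\|$-type quantities, and the intertwining $\pi(D)(r^* f) = r^*(\pi(D)f)$ (valid because $D$ is right-invariant $d$-vertical, by the computation in Lemma \ref{vect_rep}) reduces this to $\|(1+\Delta)^{n/2}(\pi(D)^n f)\|_{L^2(M,\mu_M)}$, an ordinary Sobolev norm of $f$ shifted by a fixed amount. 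Equivalence of the two $L^2$ structures (from $\mu_M$ versus the reference Riemannian density) introduces only a bounded multiplicative factor on the compact $M$. Once this comparison is in place, all three conditions of Definition \ref{direct_morph} hold and $\beta_{\maG}$ is a direct morphism.
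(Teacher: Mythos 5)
Your overall strategy is aligned with the paper's: establish that $\pi$ is a morphism of filtered algebras, that the map on smoothing ideals is a tame bimodule map, and that $r^*$ is a tame left-module map using the intertwining $D(r^*f)=r^*(\pi(D)f)$ together with the $L^2$ comparison from the fiber-integration formula. Your remarks on the grading comparison and on surjectivity of $\alpha^l$ not being required (since the morphism is not claimed to be monic) are correct and useful. There are, however, two issues worth flagging.

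First, and this is the substantive gap: you dispose of condition (1) by asserting that $\pi(\Psi^m(\maG))\subseteq\Psi^m(M)$ is ``the content of the results of \cite{LN}.'' The paper treats this as a genuinely nontrivial point and argues it in two stages. What the cited results give directly is Sobolev boundedness: $\pi(P)$ is bounded $H^s(M)\to H^{s-m}(M)$, with the Sobolev scale independent of the choice of elliptic $D\in\Psi^1(\maG)$. To conclude that $\pi(P)$ is a pseudo-differential operator at all, the paper invokes the Kohn--Nirenberg commutator criterion \cite{KN}: since every vector field on $M$ is $\pi(V)$ for some $V\in\Psi^1(\maG)$ (by surjectivity of the anchor), iterated commutators $[Y_1,[\ldots[Y_n,\pi(P)]]\ldots]=\pi([V_1,[\ldots[V_n,P]]\ldots])$ remain bounded $H^s\to H^{s-m}$, which characterizes a (not necessarily classical) PDO of order $m$. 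Upgrading to a \emph{classical} PDO — as required so that $\pi$ lands in $\pdo{M}$ with the filtration and symbol map used throughout — needs a separate Fourier integral operator argument: on kernels the vector representation is pushforward along $r\times d:\maG\to M\times M$, which is an FIO of order $0$ associated to the graph of $r\times d$, and this extends to conormal distributions to produce classical kernels on $M\times M$. None of this is in your sketch; the bare citation of \cite{LN} over-claims.

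Second, a notational point that may signal a confusion: you repeatedly write $r^*$ for the map $\smoothing(\maG)\to\smoothing(M)$ in condition (2). That map is the restriction of $\pi$ to the smoothing ideal, which on Schwartz kernels is the \emph{pushforward} $(r\times d)_*:\smooth(\maG:d^*\maD)\to\smooth(M\times M)$, not a pullback by $r$. Your prose description (``a smooth convolution kernel on $\maG$ pushes to a smooth kernel on $M\times M$'') is the right idea, but the symbol $r^*$ is misleading and risks conflating $\alpha^r$ with $\alpha^l$. Relatedly, your declared ``main obstacle'' — the grading comparison for $r^*:\smooth(M)\to\maE^\infty$ — is in fact the routine part once the intertwining and the fiber-integral $L^2$ estimate are in hand; the real work in the paper is precisely the classicality of $\pi(P)$ that your proposal does not address.
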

\begin{proof}
Our first goal is to show that the vector representation of an element $P\in\pdo{\maG}$ is in fact a classical pseudo-differential operator on $M$ of the same order, thereby providing a map of filtered algebras $\pi:\pdo{\maG}\rightarrow \pdo{M}$.
It is clear from Lemma \ref{vect_rep} that $H=L^2(M)$ is a bounded $*$ representation and by \cite{LN} the Sobolev spaces $H^s$ defined from a positive elliptic  operator  $D\in \Psi^1(\maG)$  over a bounded representation are independent of the choice of $D$. Furthermore, it can be shown that any $P\in\Psi^m(\maG)$ provides a bounded operator $\pi(P):H^s\rightarrow  H^{s-m}$. From \ref{vect_rep} it follows that  $\pi(D)$, being an elliptic pseudo-differential operator,  defines  the usual Sobolev spaces, that is  $H^s=H^s(M)$.    By a classical theorem of Kohn-Nirenberg \cite{KN} any continuous operator $L:\smooth(M)\rightarrow \smooth(M)$  whose iterative commutators with any given finite set of vector fields $Y_j$, $0\leq j\leq n$, has the property that $[Y_1,[\ldots [Y_n,L]]\ldots]$ extends to a continuous linear map $H^s(M)\rightarrow H^{s-m}(M)$ is a (not necessarily classical)  pseudo-differential operator of order $m$ on $M$. 

However, verifying that $\pi(P)$ is a classical pseudo-differential operator requires the theory of Fourier integral operators.  Briefly, on regularizing operators the vector representation $\pi:\smoothing(\maG)\rightarrow \smoothing(M)$ can be viewed as a map on smoothing kernels $\pi:\smooth(\maG:d^*\maD)\rightarrow \smooth(M\times M:\pi_2^*|\Lambda|(M))$  and can be identified with integration along fibers of the map $r\times d:\maG\rightarrow M\times M$. The integration along fibers is a Fourier integral operator with distributional kernel in $I^0(\maG\times M\times M,\mathrm{graph}(r\times d):\operatorname{End}(d^*\maD,d^*|\Lambda|(M))$  which extends  naturally to an operator $(r\times  d)_*:I^m(\maG,M:d^*\maD)\rightarrow I^m(M\times M,\Delta:\pi_2|\Lambda|(M))$. This extension, after identification with distributional kernels, is the vector representation on $\pdo{\maG}$.
 This shows that $\pi:\pdo{\maG}\rightarrow \pdo{M}$ is a morphism of filtered algebras. 

The defining relation $P\circ r=r\circ \pi(P)$ for the vector representation implies immediately that $r^*:\smooth(M)\rightarrow \maE^{\infty}$ is a module map over  $\pdo{\maG}$. The tameness of the map $r^*$ follows from  the following simple estimate.
\begin{align*}
\|r^*\phi\|_{\maE}&=\underset{x}{\textrm{sup}}\left(\int_{\maG_x}|r^*\phi|^2d\mu_x\right)^{\frac{1}{2}}\\
&= \underset{x}{\textrm{sup}}\left(\int_{M}|\phi|^2 \textrm{vol}(\maG_x^y)^2d\mu_M\right)^{\frac{1}{2}}\leq C\|\phi\|_{L^2(M)}.
\end{align*}

The  tameness of $\pi:\smoothing(\maG)\rightarrow \smoothing(M)$ will follow immediately from the definition of the  grading on $\smoothing(\maG)$.
\end{proof}

As a consequence of the above morphism $\beta_{\maG}$, a distribution on $M$ can be considered as a distribution on $\textrm{SS}(\maG)$. Thus if $u\in\maD'(M)$ then 
\[\beta_{\maG}^*\Theta_u=r^*(\pi(T)u)\qquad T\in\smoothing(\maG).\]
  By assumption, $q:A(\maG)\rightarrow TM$ is surjective and hence the dual map $q^*:T^*M\rightarrow A^*(\maG)$ is injective, which gives a map between the symbol spaces $\bar{q}:\smooth(A^*(\maG)\setminus{\{0\}})\rightarrow \smooth(T^*M\setminus{\{0\}})$.
  \begin{lem} \label{anchor}The action of $\pi$ on the associated graded algebras is given by $\bar{q}$.
  Thus $\WF(\beta_{\maG}^*\Theta_u)$ is the pull-back $\bar{q}^*(\WF(\Theta_u))$.
  \end{lem}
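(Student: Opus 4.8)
The plan is to identify the action of $\pi$ on associated graded algebras explicitly and then combine this with the functoriality of $\WF^{A}$ under direct morphisms.

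First I would compute $\mathrm{Gr}(\pi)$ on the level of principal symbols. Recall that the associated graded algebra of $\pdo{\maG}$ is the space of homogeneous sections of $\smooth(A^*(\maG))$, while that of $\pdo{M}$ is the space of homogeneous functions on $T^*M$. Using the Fourier-integral-operator description of the vector representation established in the previous theorem — namely that $\pi$ is identified with integration along the fibers of $r\times d:\maG\to M\times M$, with kernel in $I^0(\maG\times M\times M,\mathrm{graph}(r\times d))$ — the canonical relation is the conormal bundle of $\mathrm{graph}(r\times d)$, and the resulting symbol map on principal symbols is precisely restriction along the embedding $q^*:T^*M\hookrightarrow A^*(\maG)$. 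This is the content I would spell out: for $P\in\Psi^m(\maG)$ one has $\sigma(\pi(P))=q^*\circ\sigma(P)$, i.e. $\sigma(\pi(P))(\xi)=\sigma(P)(q^*\xi)$ for $\xi\in T^*M$. Equivalently, on the degree-zero parts, $\alpha_0^a=\mathrm{Gr}(\pi)_0$ is the pullback $\bar q$ under $q^*$; this also makes clear that $\bar q$ is surjective since $q^*$ is injective with a smooth left inverse (the metric splitting).

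Next I would invoke the general functoriality statement for direct morphisms from Section 5: for a direct morphism $\alpha$ with algebra component $\alpha^a$ inducing $\alpha_0$ on the degree-zero graded pieces,
\[
\WF^{A_1}(\alpha^*u)\subseteq \alpha_0^*\bigl(\WF^{A_2}(u)\bigr),
\]
and, when $\alpha_0$ is surjective, one gets the reverse containment in the appropriate sense. Applying this to $\alpha=\beta_{\maG}$, with $A_2=\pdo{M}$ and $A_1=\pdo{\maG}$, and using that $\alpha_0=\bar q$ is surjective, yields equality: $\WF^{\pdo{\maG}}(\beta_{\maG}^*\Theta_u)=\bar q^*(\WF^{\pdo{M}}(\Theta_u))$. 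Since the zero set of $\WF^{\pdo{M}}(\Theta_u)$ in $T^*M$ is exactly $\WF(u)$ by the earlier proposition, passing to zero sets in $A^*(\maG)$ and using the injectivity of $q^*$ converts the ideal-level identity into the stated set-level identity $\WF(\beta_{\maG}^*\Theta_u)=q^*(\WF(\Theta_u))$, interpreting $\bar q^*$ on zero sets as the preimage under $q^*$ intersected appropriately, which for an injective $q^*$ is just the image $q^*(\WF(u))$.

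The main obstacle I expect is the first step: rigorously extracting the principal symbol of the Fourier integral operator $(r\times d)_*$ and confirming that its symbol map is exactly the restriction $q^*$, with the correct half-density and order bookkeeping so that orders are preserved and no spurious factors appear on the symbol. This requires a careful but standard computation with the canonical relation $N^*(\mathrm{graph}(r\times d))$ and its composition with the diagonal in $T^*(M\times M)$; the surjectivity of the anchor map $q$ is what guarantees the relevant intersections are clean and the pushforward stays within classical (one-step polyhomogeneous) operators of the expected order. Once that symbol identification is in hand, everything else is a direct application of results already proved in the paper.
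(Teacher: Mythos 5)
Your identification of $\mathrm{Gr}(\pi)$ via the Fourier-integral-operator description of $(r\times d)_*$ is a genuinely different route from the paper's. The paper argues at the level of generators: for a section $s$ of $A(\maG)$, the right-invariant vector field $X_s\in\Psi^1(\maG)$ has $\pi(X_s)=q(s)$ essentially by definition of the vector representation, so $\sigma(\pi(X_s))=\sigma(q(s))=\bar q(\sigma(X_s))$ tautologically; multiplicativity of the symbol map then gives agreement on all right-invariant differential operators (i.e.\ polynomial symbols), and density finishes. Your FIO route reaches the same conclusion but needs the whole apparatus of composing canonical relations and half-density normalisations --- the step you correctly flag as the bottleneck is precisely the step the paper sidesteps. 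Your route does have the small advantage of making the surjectivity of $\bar q$ geometrically transparent at the same stroke, but it is considerably heavier.

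For the ``Thus'' sentence there is a genuine gap in your proposal. The paper's direct-morphism functoriality gives only the ideal inclusion $\WF^{A_1}(\alpha^*u)\subseteq\alpha_0^*\bigl(\WF^{A_2}(u)\bigr)$, which at the level of zero sets yields only $\WF(\beta_{\maG}^*\Theta_u)\supseteq q^*(\WF(u))$. Your claim that surjectivity of $\bar q$ ``gives the reverse containment in the appropriate sense'' does not follow formally. Recall that $\WF^A(\phi)$ is the image under $\sigma$ of those $a\in A_0$ with $\phi\cdot a\in\Op{Reg}$; so knowing $\sigma(\pi(Q))=\sigma(P)$ for some $P\in\Psi^0(M)$ with $Pu$ smooth only tells you $\pi(Q)=P+R$ with $R\in\Psi^{-1}(M)$, and $Ru$ need not be smooth. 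To prove the containment $\WF(\beta_{\maG}^*\Theta_u)\subseteq q^*(\WF(u))$ one must actually produce, for each $\xi_0\in A^*(\maG)\setminus q^*(\WF(u))$, an order-zero $Q\in\Psi^0(\maG)$ noncharacteristic at $\xi_0$ with $\pi(Q)u$ genuinely smooth --- including the case $\xi_0\notin q^*(T^*M)$, where one needs $\pi(Q)$ itself to be smoothing. That is a microlocal lifting argument in the groupoid calculus, not a consequence of surjectivity of the symbol map. In fairness, the paper's own proof addresses only the first sentence and leaves the ``Thus'' unargued as well, so this is a gap you inherited rather than introduced; but your proposal asserts the equality as if it were automatic, and it is not.
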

  \begin{proof}
  It suffices to show that $\bar{q}$  agrees with $\textrm{Gr}(\pi) $ on a dense subspace of the space of symbols. Observe that if $s$ is a section of $A(\maG)$ and $X_s$ is the right-invariant vector field in $\pdo{\maG}$ determined by it, then:
  \[ \sigma(\pi(X_s))=\sigma(q(s))=\bar{q}(\sigma(X_s)).\]
  This implies that $\bar{q}$ maps the symbol $\sigma(D)$  of a right-invariant differential operator $D$ on $\maG$ to the symbol of the operator $\pi(D)$.  Thus the map  $\bar{q}$ matches with  $\textrm{Gr}(\pi)$, the map  induced by $\pi $ on the associated graded algebra, on a dense subspace of polynomial functions.
  \end{proof}
\section{Propagation of singularities}

From now on  let us assume that the filtered algebra $A$  has a locally convex topology with respect to which the multiplication is at least separately continuous.

We shall consider an action of the group $\RR$ on a singularity structure $(A,X,Y)$ via direct morphisms, that is  $t\rightarrow \alpha_t:=(\alpha^a_t,\alpha^r_t,\alpha^l_t)$.  We shall simplify the notation by writing $t\cdot a, x\cdot t,t\cdot y$, respectively, for the action on the spaces $A$,  $X$, and $Y$.
A dynamics on a singularity structure $(A,X,Y)$ corresponds to  the action of $\RR$ on  $A$  being compatible with another action on $X$ in the following sense:
\begin{equation}\label{compatible}
(x\cdot t)a=\left(x(t\cdot a)\right)\cdot t.
\end{equation}
 for all $t\in \RR$ and $a\in A$, $x\in X$. A similar compatibility insures that the action on $Y$ provides a suitable $A$-module morphism. 
Clearly, actions of $\RR$  lead to a dynamics in  $\Op{PT}(X,Y)$  
 and we set 
\[\WF^A_t(\phi):=\WF^A(\phi\cdot  t) \qquad\phi\in\Op{PT}(X,Y).\]
In the classical case of $(\Psi^{\infty}(M),\smoothing(M),\smooth(M))$ such a dynamics is provided by solutions to  first order  hyperbolic  partial differential equations.  The following notion is motivated from this point of view.
 \begin{definition}
 We say that an action of $\RR$ on  a unital algebra is generated by an element $a\in A$ if for any element $p$
 \[\left.\frac{d^n}{dt^n}t\cdot p\right|_{t=0}=\delta_a^n(p), ~~~n>0.\]
 where $\delta_a(p)=[a,p]$
 \end{definition}
 Typically, given a self-adjoint elliptic operator $Q\in \Psi^1(M)$ on a closed manifold, an action of $\RR$ on $\pdo{M}$ is given by $t\cdot P=P_t=e^{itQ}Pe^{-itQ}$.  The standard Egorov theorem says that $P_t$ is  also a pseudo-differential operator  whose principal symbol corresponds to the symplectic Hamiltonian flow of the symbol of $P$ with Hamiltonian $\sigma(Q)$. Such an action naturally extends to the case of families of right-invariant operators on a groupoid.
 
  In general, to obtain such an $\RR$ action on a spectral triple $(A,H,D)$ by conjugation  
with $e^{it|D|}$, certain conditions  need to be imposed. Although the simple condition 
\[\underset{n}{\limsup} \frac{\textrm{Log}\|\delta^n(a)\|}{n}< \infty,\]
insures the convergence,
\[e^{it|D|}ae^{-it|D|}=\sum_{j}\frac{t^j\delta^j(a)}{j!},\]
it does not guarantee a priori that  the resulting operator is a pseudo-differential operator.

In the following section we shall work only with commutative examples where actions are generated by an element of $A$.
\subsection{The commutative case}
The classical theorems about the propagation of singularities  correspond to examples of 
singularity structures $(A,X,Y)$ such that the algebra $B=A_0/A_{-1}$ is a nice  commutative algebra and the generator $a$ of the  action 
is a derivation on $B$ through the commutator $A_0\ni a_0\rightarrow [a_0,a]$. A prototypical example is provided by  a  Weyl algebra $A$ in the sense of Guillemin \cite{Guillemin}.

Let  $C$ be a separable $C^*$-algebra. In what follows,  $\mathbf{B}(\maH)$ and $\mathbf{K}(\maH)$ denote the space of bounded and
compact $C$-linear operators on a Hilbert $C$-module $\maH$, respectively.  We shall suppress  mentioning the algebra $C$  in the notation.

\begin{definition}
 A singularity structure $(A,X,Y)$  is called  commutative (over the $C^*$-algebra $C$) if the algebra $A$ satisfies the following conditions:
 \begin{enumerate}[(a)]
  \item The algebra $A=\bigcup_{i\in\ZZ}A_i$  is a unital  *-algebra of (possibly unbounded) regular operators on a separable Hilbert $C$-module $\maH$.
  \item $A_0\subset \mathbf{B}(\maH)$ and $A_{-1}\subset \mathbf{K}(\maH)$.
  \item\label{commutator} The bracket  reduces the filtration order by $1$, that is 
  \[a\in A_i,\ b\in A_j~\Longrightarrow~[a,b]\in A_{i+j-1}.\]
  The latter two conditions together imply that $B=A_0/A_{-1}$ is a unital commutative subalgebra of the  Calkin algebra $\mathcal{Q}(\maH):=\mathbf{B}(\maH)/\mathbf{K}(\maH)$. The $C^*$-completion $\bar{B}$ of $B$ is a commutative  $C^*$ algebra and hence $\bar{B}=\mathcal{C}(N)$ for some compact Hausdorff space $N$. We shall further assume that
  \item 
The space $N$  is a compact manifold, $B=\smooth(N)$, and the commutator  with an element $a\in A_1$ defines a smooth vector field on $N$. More precisely,
  given $a\in A_1$ there exists a unique vector field $V_a \in\mathfrak{X}(N)$ such that:
  \[\sigma([a_0,a])=V_a(\sigma(a_0)).\]
  Note that it follows from \eqref{commutator} above that the expression on the  left hand side of the above equation is well-defined.
 \end{enumerate}
\end{definition}

\begin{theo}\label{propagation}
Let $(A,X,Y)$ be a commutative singularity structure and let $a\in A_1$ generate an action on   $(A,X,Y)$. Let $\mu_t$ be the flow generated by the vector field $V_a$. Then for any $\phi\in PT(X,Y)$, 
 \[\WF^A(\phi\cdot t)=\mu_t^*\WF^A(\phi).\]
\end{theo}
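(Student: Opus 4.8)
The plan is to unwind both sides of the claimed identity using the definition of $\WF^A$ and the hypothesis that $a\in A_1$ generates the $\RR$-action. Recall that by definition $\WF^A(\phi\cdot t)=\{\sigma(b)\mid b\in A_0,\ (\phi\cdot t)\cdot b\in\Op{Reg}(X,Y)\}$, and using the compatibility relation \eqref{compatible} we have $(\phi\cdot t)\cdot b = (\phi\cdot(t\cdot b))\cdot t$ (the dynamics version of \eqref{compatible} applied to maps rather than module elements). Since right multiplication by $t$ on $\Op{PT}(X,Y)$ is an invertible operation that preserves $\Op{Reg}(X,Y)$ (it is induced by the direct morphism $\alpha_t$, whose components are tame isomorphisms with tame inverses $\alpha_{-t}$, hence by Lemma~\ref{composition} regularity is preserved in both directions), we get $(\phi\cdot t)\cdot b\in\Op{Reg}(X,Y)$ if and only if $\phi\cdot(t\cdot b)\in\Op{Reg}(X,Y)$. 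Therefore
\[
\WF^A(\phi\cdot t)=\{\sigma(b)\mid b\in A_0,\ \phi\cdot(t\cdot b)\in\Op{Reg}(X,Y)\}
=\{\sigma(t^{-1}\cdot c)\mid c\in A_0,\ \phi\cdot c\in\Op{Reg}(X,Y)\},
\]
after the substitution $c=t\cdot b$, which is a bijection of $A_0$ onto itself since each $\alpha^a_t$ is a filtered algebra automorphism. So the whole statement reduces to the assertion that the induced map $\sigma(c)\mapsto\sigma(t^{-1}\cdot c)$ on $B=\smooth(N)$ equals the pull-back $\mu_t^*$ by the flow of $V_a$.

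The second step is thus purely about the action on the symbol algebra. The map $t\mapsto\alpha^a_t$ descends to an $\RR$-action on $B=A_0/A_{-1}=\smooth(N)$ by algebra automorphisms; call it $\bar\alpha_t$. I want to show $\bar\alpha_t=\mu_{-t}^*$ (the sign convention to be matched against \eqref{compatible}; the statement as written presumably fixes it so that the final formula reads $\mu_t^*$). Differentiating the defining relation for ``generated by $a$'', $\left.\tfrac{d}{dt}(t\cdot a_0)\right|_{t=0}=[a,a_0]$ for $a_0\in A_0$, and passing to symbols gives $\left.\tfrac{d}{dt}\bar\alpha_t(\sigma(a_0))\right|_{t=0}=\sigma([a,a_0])$, which by condition (d) of the definition of a commutative singularity structure equals $-V_a(\sigma(a_0))$ (up to the sign in $\sigma([a_0,a])=V_a(\sigma(a_0))$). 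Hence $t\mapsto\bar\alpha_t$ is a one-parameter group of automorphisms of $\smooth(N)$ whose infinitesimal generator is the derivation $-V_a$. Since $N$ is a compact manifold, one-parameter groups of automorphisms of $\smooth(N)$ correspond exactly to flows of vector fields (a diffeomorphism of $N$ is recovered from the corresponding automorphism of $\smooth(N)$ as a morphism of the maximal ideal spectrum), so $\bar\alpha_t=\mu_{-t}^*$ where $\mu_t$ is the flow of $V_a$. Combining with the first step and taking zero sets as in Remark~\ref{idealrem}, $\WF^A(\phi\cdot t)=\bar\alpha_{-t}(\WF^A(\phi))=\mu_t^*\WF^A(\phi)$.

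The main obstacle I anticipate is the regularity of the $\RR$-action needed to make the second step rigorous: to conclude that $\bar\alpha_t$ is the flow of a vector field one needs enough smoothness of $t\mapsto\bar\alpha_t(\sigma(a_0))$ in $\smooth(N)$, and the ``generated by $a$'' hypothesis as stated only controls the derivatives at $t=0$ via the iterated brackets $\delta_a^n$. One has to argue that the cocycle property $\bar\alpha_{t+s}=\bar\alpha_t\circ\bar\alpha_s$ together with smoothness at $0$ propagates smoothness to all $t$, and that the formal Taylor series $\sum_n \tfrac{t^n}{n!}\delta_a^n(a_0)$ actually converges in an appropriate locally convex sense (here the standing assumption at the start of the section that $A$ carries a locally convex topology with separately continuous multiplication, plus whatever growth bound is implicit, is doing the work). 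A clean way to sidestep part of this is to note that $\bar\alpha_t$ is a priori a $C^*$-algebra automorphism of $\bar B=\maC(N)$, hence is induced by a homeomorphism $\nu_t$ of $N$; the group property makes $t\mapsto\nu_t$ a flow on $N$; and then smoothness of $\nu_t$ and the identification of its generating vector field with $V_a$ follow from the commutative-singularity-structure axioms restricted to $A_1$. I would also need to double-check the direction of the identity and the sign in passing from $[a,a_0]$ to $V_a$ against \eqref{compatible}, adjusting $\mu_t$ versus $\mu_{-t}$ accordingly so the final formula matches the statement.
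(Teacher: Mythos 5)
Your approach matches the paper's proof step for step: reduce, via the compatibility relation, to the induced $\RR$-action on the symbol algebra $B=A_0/A_{-1}$, then identify that action with the flow of $V_a$ (up to sign) using axiom (d) of a commutative singularity structure. One sign slips in your first step: expanding \eqref{compatible} for the induced right action on $\Op{PT}(X,Y)$ gives $(\phi\cdot t)\cdot b=(\phi\cdot(t^{-1}\cdot b))\cdot t$ rather than $(\phi\cdot(t\cdot b))\cdot t$ — this is precisely the equivalence $\phi_t P\in\Op{Reg}\Leftrightarrow\phi\,(t^{-1}\cdot P)\in\Op{Reg}$ that the paper's proof records — but since you explicitly flagged the sign conventions for checking, this is a repair rather than a gap. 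Your worries about the smoothness of $t\mapsto\bar\alpha_t$ on $\smooth(N)$ and the passage from the iterated-bracket data $\delta_a^n$ at $t=0$ to an honest flow on the compact manifold $N$ are legitimate and in fact go beyond the paper, whose proof simply differentiates $\sigma(t^{-1}\cdot P)$, applies $\sigma([a_0,a])=V_a(\sigma(a_0))$, and stops without addressing convergence or regularity of the one-parameter group; your $C^*$-algebraic workaround via $\bar B=\maC(N)$ is a sensible way to fill that in.
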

\begin{proof}
Set $\phi_t=\phi\cdot t$ for $\phi\in\Op{PT}(X,Y)$. 
 For any $\phi\in PT(X,Y)$, abbreviate $\WF^A(\phi)$ by $I_{\phi}$. Let $P\in A_0$, then it follows that:
 \begin{align*}
  \sigma(P)\in I_{\phi_t}&\Leftrightarrow \phi_t P\in \Op{Reg}(X,Y)\\
       &\Leftrightarrow \phi (t^{-1}\cdot P)\in \Op{Reg}(X,Y)\\
            &\Leftrightarrow \sigma(t^{-1}\cdot P)\in I_{\phi}
\end{align*}
Thus we conclude
\begin{align*}
 \frac{d}{dt}\sigma(t^{-1}\cdot P)&=-\sigma([a,t^{-1}P])\\
                                  &=V_a(\sigma(t^{-1}\cdot P))
\end{align*}
\end{proof}
Next we show that various propagation results can be explained in this framework.
\subsubsection{}
For the sake of  simplicity let $M$ be a closed compact oriented Riemannian manifold.
Let $D$ be an order $1$ self-adjoint pseudodifferential operator on $L^2(M)$. Let $u_0\in \maD'(M)$ be a distribution  and consider the Cauchy problem: 
\begin{align*}
 \frac{d}{dt}u(x,t)&=iDu(x,t)\\
 u(-,0)&=u_0
\end{align*}
The solution operator $e^{itD}$ defines an action 
on $\smoothing(M)$ by  
$\beta_t(T):=Te^{-itD}$, as well as an action on $\Psi^{\infty}(M)$  by $\alpha_t(P):=e^{itD}Pe^{-itD}$.
These two actions are certainly compatible in the sense of (\ref{compatible}). The singularity structure $(\Psi^{\infty}(M),\smoothing(M),\smooth(M))$ is commutative and hence Theorem \ref{propagation} applies to all $\phi\in \Op{PT}(M)$. Thus the wavefront set of $\phi$  propagates along the Hamiltonian flow of $\sigma(D)$. This is of course well known when $\phi=\Theta_u$ by \cite{Hormander}, as one clearly has $\Theta_u\cdot t=\Theta_{e^{itD}u}$, which is the solution to the Cauchy problem above.

If one considers maps in $\Op{PT}(M)$ as generalized functions extending the distributions, then as a consequence of Theorem \ref{propagation} propagation of singularities of generalized functions also follows. 
\subsection{}Given a groupoid $\maG$ satisfying the conditions in Section \ref{groupoid_calc}, we observe that  $SS(\maG)$ is a commutative  singularity structure over the algebra $C(M)$, where $M$ is the space of units. Given an elliptic order one  operator $D\in\Psi^1(\maG)$, the fiberwise Egorov theorem on $\maG_x$ gives an action on $P\in\pdo{\maG}$ by $P\mapsto e^{itD}Pe^{-itD}:= (e^{itD_x}P_xe^{-itD_x})_x$.  As in the previous section this defines an $\RR$-action on $\textrm{SS}(\maG)$.

\begin{theo}
Let $D$ be an elliptic order one operator in $\pdo{\maG}$. Consider the $\RR$-action  on $\textrm{SS}(\maG)$  generated by $D$, and correspondingly the action  on $\textrm{SS}(M)$ generated by $\pi(D)$.  The vector representation $\pi:\textrm{SS}(\maG)\rightarrow \textrm{SS}(M)$ is an equivariant morphism.
\end{theo}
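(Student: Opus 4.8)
The plan is to observe first that $\beta_{\maG}=(\pi,\ \pi|_{\smoothing(\maG)},\ r^{*})$ is already known to be a direct morphism by the theorem constructing $\beta_{\maG}$ above, so that only its equivariance with respect to the two $\RR$-actions remains to be checked. By Lemma \ref{vect_rep} together with that theorem, $\pi(D)$ is an elliptic, essentially self-adjoint classical pseudo-differential operator of order $1$ on $M$, so $\pi(D)$ does generate an $\RR$-action on $\textrm{SS}(M)$ of the type appearing in Theorem \ref{propagation}, with components $\alpha^{a}_{t}(Q)=e^{it\pi(D)}Qe^{-it\pi(D)}$ on $\pdo{M}$, $\alpha^{r}_{t}(S)=Se^{-it\pi(D)}$ on $\smoothing(M)$ and $\alpha^{l}_{t}(\phi)=e^{-it\pi(D)}\phi$ on $\smooth(M)$; the action on $\textrm{SS}(\maG)$ is the analogue with $D$ in place of $\pi(D)$. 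Unwinding the definition of an equivariant morphism, what must be proved is $\pi(e^{itD}Pe^{-itD})=e^{it\pi(D)}\pi(P)e^{-it\pi(D)}$ on $\pdo{\maG}$, $\pi(Te^{-itD})=\pi(T)e^{-it\pi(D)}$ on $\smoothing(\maG)$, and $r^{*}(e^{-it\pi(D)}\phi)=e^{-itD}(r^{*}\phi)$ on $\smooth(M)$.

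All three identities I would reduce to a single intertwining relation between the infinitesimal generators. Since $\pi:\pdo{\maG}\to\pdo{M}$ is a morphism of filtered algebras and $D$, $\pi(D)$ lie in these algebras, $\pi$ intertwines the inner derivations, $\pi([D,P])=[\pi(D),\pi(P)]$; multiplicativity of $\pi$ on the ideal $\smoothing(\maG)$ (note $TD\in\smoothing(\maG)$) gives $\pi(TD)=\pi(T)\pi(D)$; and the defining relation $P\circ r^{*}=r^{*}\circ\pi(P)$ of the vector representation, specialised to $P=D$, gives $D\circ r^{*}=r^{*}\circ\pi(D)$. I would then integrate these relations: working in the locally convex algebra $\pdo{M}$, whose multiplication is separately continuous by the standing assumption of this section, the curves $t\mapsto\pi(e^{itD}Pe^{-itD})$ and $t\mapsto e^{it\pi(D)}\pi(P)e^{-it\pi(D)}$ solve the same linear equation $\dot{\gamma}=[\pi(D),\gamma]$ with common initial value $\pi(P)$, hence coincide; equivalently one establishes $\pi(e^{itD})=e^{it\pi(D)}$ directly and then uses multiplicativity of $\pi$. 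The $X$-component identity follows in the same way (or at once from $\pi(e^{itD})=e^{it\pi(D)}$), and the $Y$-component identity is obtained by exponentiating $D\circ r^{*}=r^{*}\circ\pi(D)$, equivalently by applying the defining relation of the vector representation to the solution operators of the two Cauchy problems. The compatibility conditions (\ref{compatible}) needed for these to be genuine dynamics were already recorded when the actions were introduced, and since $e^{\pm itD}$, $e^{\pm it\pi(D)}$ are bounded on every Sobolev scale, conjugating resp.\ right-multiplying by them produces tame maps, so the equivariant data still constitute a direct morphism.

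The step I expect to be the main obstacle is the rigorous justification of $\pi(e^{itD})=e^{it\pi(D)}$, or of the ODE-uniqueness argument that replaces it: $D$ is unbounded of order $1$, $e^{itD}$ is only a Fourier integral operator and lives as a unitary multiplier of $C^{*}(\maG)$, so one must know that $\pi$ extends suitably to such multipliers and is continuous enough to differentiate the relevant curves in the topology of $\pdo{M}$, and that separate continuity of the product is enough for the uniqueness statement. A safe fallback is the fibrewise, Hilbert-space route: on each $\maG_{x}$ the fibrewise Egorov theorem already furnishes the $\RR$-action, $\pi(D)$ is essentially self-adjoint on $L^{2}(M)$ by Lemma \ref{vect_rep}, Stone's theorem identifies $\pi(e^{itD})$ with $e^{it\pi(D)}$ on the common core, and the three identities then follow from the naturality of the vector representation with respect to integration along the fibres of $r\times d$.
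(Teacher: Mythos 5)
Your proof follows essentially the same route as the paper: both arguments reduce the identity $\pi(e^{itD}Pe^{-itD}) = e^{it\pi(D)}\pi(P)e^{-it\pi(D)}$ to a uniqueness statement for the relevant evolution equation, using the intertwining relation $P\circ r^* = r^*\circ\pi(P)$ and its infinitesimal consequence to pull the dynamics back along $r$. The paper phrases this at the level of the underlying Cauchy problems for functions on $M$ and $\maG$ and invokes uniqueness there; your primary formulation runs the same argument one level up, as ODE-uniqueness in the filtered algebra $\pdo{M}$ for the operator-valued curves $\pi(e^{itD}Pe^{-itD})$ and $e^{it\pi(D)}\pi(P)e^{-it\pi(D)}$, while your ``fallback'' via $r^*$, the defining relation of the vector representation, and Stone's theorem on $L^2(M)$ is precisely the paper's argument spelled out. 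A genuine added value of your write-up is that you flag the analytic subtleties the paper leaves implicit, namely that $e^{itD}$ is only a multiplier of $C^*(\maG)$ and that one must justify differentiating these curves in a suitable topology and extending $\pi$ to such multipliers; the paper's terse proof glosses over this, and your fiberwise Hilbert-module route is the cleanest way to supply it.
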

\begin{proof} 
 We need to show that $\pi( e^{itD}Pe^{-itD})= e^{it\pi(D)}\pi(P)e^{-it\pi(D)}$. 

For $f\in \smooth(M)$  one can obtain $e^{it\pi(D)}\pi(P)e^{-it\pi(D)}f$ as the solution to the Cauchy problem
\begin{align*}
 \frac{d}{dt}u(x,t)&=i\pi(D)u(x,t)\\
 u(-,0)&=P(g)
\end{align*}
where $g$ is also the unique solution to some Cauchy problem, namely
\begin{align*}
 \frac{d}{dt}u(x,t)&=iDu(x,t)\\
 u(-,0)&=f
\end{align*}
  Pulling back with $r$ these equations and the initial data, one obtains the desired equality by appealing to the uniqueness of the solution to the mentioned Cauchy problem. 
 
\end{proof}

Thus, Lemma \ref{anchor} relates the dynamics of singularities under longitudinal propagation of the distributions $r^*\Theta_u$ (as seen in the symbol space $\smooth(A^*(\maG)\setminus{0})$) with that of  the distribution $u$ under 
the transverse action in vector representation via the anchor map.

 \subsection*{Acknowledgment} 
 Shantanu Dave would like to thank for the hospitality of the Institut des Hautes \'Etudes Scientifiques (I.H.\'E.S.), where a part of this work was finished.





\end{document}